\numberwithin{equation}{section}
\theoremstyle{plain}
\newtheorem{main}{Theorem}
\newtheorem{mcor}[main]{Corollary}
\newtheorem{theorem}{Theorem}[section]
\newtheorem{lemma}[theorem]{Lemma}
\newtheorem{proposition}[theorem]{Proposition}
\theoremstyle{definition}
\newtheorem*{definition*}{Definition}
\newtheorem{remark}[theorem]{Remark}
\newtheorem{fact}[theorem]{Fact}
\newtheorem{conjecture}[theorem]{Conjecture}
\begin{document}

\title[Remarks on the diagonal embedding   and strong 1-boundedness]
{ Remarks on  the diagonal embedding   and strong 1-boundedness}

\author[Srivatsav Kunnawalkam Elayavalli]{Srivatsav Kunnawalkam Elayavalli}
\address{Institute of Pure and Applied Mathematics, UCLA, 460 Portola Plaza, Los Angeles, CA 90095, USA}\email{srivatsav.kunnawalkam.elayavalli@vanderbilt.edu}
\urladdr{https://sites.google.com/view/srivatsavke/home}

{\thanks{S.K.E was supported by a Simons Postdoctoral Fellowship. }}

\begin{abstract}
    We identify a large class of hyperbolic groups whose von Neumann algebras  are not strongly 1-bounded: Sela's hyperbolic towers over $F_2$ subgroups. We also show that any intermediate subalgebra of the diagonal embedding of $L(F_2)$ into its ultrapower doesn't have Property (T).      
\end{abstract}
\maketitle

\section{Introduction}

Voiculescu initiated a revolutionary theory of free entropy in his paper \cite{VoiculescuFreeEntropy2}. 
 The free entropy computes the asymptotic volume of the microstate spaces (matrix models approximating the distribution of a fixed  tuple in a tracial von Neumann algebra).  Voiculescu's asymptotic freeness theorem allowed him to show that the free entropy of a tuple of freely independent semicirculars is non vanishing. On the other hand, one is able to compute the free entropy when there are algebraic constraints present in the ambient algebra, such as sufficiently many commutation relations or the existence of diffuse regular subalgebras that are hyperfinite. Combining these two ideas,   Voiculescu (in \cite{Voiculescu1996}) showed that $L(F_2)$ admits no Cartan subalgebras, then   Ge (in \cite{GePrime}) obtained  using the same idea, that $L(F_2)$ is prime.  These settled problems left open by Popa in \cite{popaorthoog} where he showed that $L(F_X)$ is prime and admits no Cartan subalgebras where $X$ is an uncountable set.

One of the main modern threads of Voiculescu's free entropy theory is that of \emph{strong 1-boundedness} for von Neumann algebras, which originated with remarkable ideas of Jung in \cite{Jung2007}. Inspired by ideas from geometric measure theory, in particular Besicovitch's classification of metric spaces with Hausdorff measure 1, Jung developed technical tools to study the case when Voiculescu's free entropy dimension (a Minkowski dimension type quantity for the microstate spaces) for a tuple is 1, and discovered natural conditions wherein this property passes to the von Neumann algebra generated by the tuple. In particular, if one locates such a tuple in a von Neumann algebra, one can automatically conclude non isomorphism with $L(F_2)$, as the free entropy dimension of the semicircular generating set is 2. Jung used this in \cite{Jung2007} to prove that $L(F_2)$ cannot be generated by two  amenable (more generally strongly 1-bounded) subalgebras with diffuse intersection.

More recently, by carefully   analyzing ideas of Jung, Hayes in \cite{Hayes2018} extracted a numerical invariant of the von Neumann algebra, called the \emph{1-bounded entropy}. This framework has proved very robust and has been used to obtain several new  rigidity results for non strongly 1-bounded von Neumann algebras (such as $L(F_2)$). For instance, Hayes showed in \cite{Hayes2018} that $L(F_2)$ does not admit even quasi regular diffuse strongly 1-bounded subalgebras, generalizing the Theorem of Voiculescu.  For more recent results see \cite{FreePinsker, BenPT, PTcor, PTkilled, fullee}. For these reasons it is of great interest to identify examples of  non strongly 1-bounded von Neumann algebras.  %The following are some examples: Hayes showed in \cite{Hayes2018} that $L(F_2)$ does not admit even quasi regular diffuse strongly 1-bounded subalgebras, generalizing the Theorem of Voiculescu. Recently, in \cite{fullee}, 1-bounded entropy was used to construct II$_1$ factors without property Gamma any of whose ultrapowers are not isomorphic to any ultrapower of $L(F_2)$.  

 In this note we observe  using 1-bounded entropy, some structural   properties  of intermediate subalgebras of the diagonal embedding of $L(F_2)$ into its ultrapower.   By way of leveraging \emph{existentially closed} (see Section \ref{existential theories}) copies of $F_2$ in the group level, our first main result identifies a family of hyperbolic groups introduced by Sela (\cite{sela1}) whose von Neumann algebras are not strongly 1-bounded von Neumann algebras. Some familiar examples are hyperbolic surface groups.

\begin{main}\label{maine}
    The group von Neumann algebras of all hyperbolic towers over $F_2$ subgroups  are not strongly 1-bounded. 
\end{main}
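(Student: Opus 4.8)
The plan is to show that the group von Neumann algebra $L(G)$ of a hyperbolic tower $G$ over an $F_2$ subgroup has infinite $1$-bounded entropy, $h(L(G))=\infty$; since $G$ is finitely generated and $L(G)$ is diffuse and Connes-embeddable (it embeds into $L(F_2)^{\cU}$, as shown below), Hayes' characterization of strong $1$-boundedness \cite{Hayes2018} — building on Jung \cite{Jung2007} — then shows $L(G)$ is not strongly $1$-bounded. Here $h(N:M)$ denotes the $1$-bounded entropy of $N$ in the presence of $M$, with its two basic monotonicity properties: $h(N_1:M)\le h(N_2:M)$ when $N_1\subseteq N_2\subseteq M$, and $h(N:M_1)\ge h(N:M_2)$ when $N\subseteq M_1\subseteq M_2$; and $h(N):=h(N:N)$, with $h(L(F_2))=\infty$ since the free entropy dimension of the generators of $L(F_2)$ is $2$.

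The first ingredient is the group-theoretic input from Sela's theory of hyperbolic towers \cite{sela1}: if $G$ is a hyperbolic tower over a subgroup $H\cong F_2$, then the inclusion $H\hookrightarrow G$ is an elementary embedding, and in particular $H$ is existentially closed in $G$ (see Section \ref{existential theories}). By the standard model-theoretic reformulation of existential closedness, $G$ embeds into a group ultrapower $\prod_{\cU}H$ extending the constant (diagonal) embedding of $H$. Composing with the trace-preserving map $\prod_{\cU}H\ni (h_n)_{\cU}\mapsto (u_{h_n})_{\cU}\in L(H)^{\cU}$, and extending to $L(G)$ — the map on $\mathbb{C}[G]$ being trace-preserving, it extends to a normal, trace-preserving, hence injective $\ast$-homomorphism on $L(G)$ — one obtains a trace-preserving embedding $\pi\colon L(G)\hookrightarrow L(F_2)^{\cU}$ whose restriction to $L(F_2)\subseteq L(G)$ is the diagonal embedding $\Delta\colon L(F_2)\hookrightarrow L(F_2)^{\cU}$. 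Thus, identifying along $\pi$, one has the chain $\Delta(L(F_2))\subseteq L(G)\subseteq L(F_2)^{\cU}$: the tower algebra sits as an intermediate subalgebra of the diagonal embedding.

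The heart of the matter is then the computation $h\bigl(\Delta(L(F_2)) : L(F_2)^{\cU}\bigr)=\infty$. The inequality ``$\le$'' is immediate from monotonicity, since $\Delta(L(F_2))\subseteq L(F_2)^{\cU}$ gives $h\bigl(\Delta(L(F_2)):L(F_2)^{\cU}\bigr)\le h\bigl(\Delta(L(F_2)):\Delta(L(F_2))\bigr)=h(L(F_2))=\infty$. For ``$\ge$'' the point is that passing to the ambient ultrapower does not genuinely constrain matricial microstates: given finitely many $y^1,\dots,y^r\in L(F_2)^{\cU}$, fix representing sequences and an index $n$ deep in $\cU$; then up to any prescribed tolerance the $y^j$ agree with fixed noncommutative $\ast$-polynomials $w^j$ in the generators of $L(F_2)$, so any sufficiently fine matricial microstate $\bar T$ of those generators is automatically completed, by suitably norm-truncated $w^1(\bar T),\dots,w^r(\bar T)$, to a microstate compatible with $y^1,\dots,y^r$. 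Hence the microstate spaces for $\Delta(L(F_2))$ in the presence of $L(F_2)^{\cU}$ have the same covering growth as those for $L(F_2)$ alone, giving $h\bigl(\Delta(L(F_2)):L(F_2)^{\cU}\bigr)=h(L(F_2))=\infty$. Combining with the chain above and the two monotonicity properties,
\[
h(L(G))=h\bigl(L(G):L(G)\bigr)\;\ge\;h\bigl(\Delta(L(F_2)):L(G)\bigr)\;\ge\;h\bigl(\Delta(L(F_2)):L(F_2)^{\cU}\bigr)=\infty,
\]
so $L(G)$ is not strongly $1$-bounded.

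I expect the main obstacle to be making the microstate-extension step rigorous — i.e.\ showing that the diagonal copy of $L(F_2)$ retains maximal $1$-bounded entropy inside $L(F_2)^{\cU}$, which is the ``studying the diagonal embedding'' part of the paper and requires care with the quantifiers in the definition of $h(\,\cdot:\cdot\,)$ and with operator-norm truncations. The other essential ingredient, Sela's elementarity (hence existential closedness) of $F_2$ inside a hyperbolic tower, is deep but is imported from \cite{sela1} rather than proved here.
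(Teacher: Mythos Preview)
Your proposal is correct and follows the same route as the paper: build the chain $L(F_2)\subseteq L(G)\subseteq L(F_2)^{\cU}$ from the existential inclusion $F_2\hookrightarrow G$, then apply monotonicity of $h(\,\cdot:\cdot\,)$ together with $h(L(F_2):L(F_2)^{\cU})=h(L(F_2))=\infty$. The step you flag as the main obstacle --- that passing to the ambient ultrapower does not decrease the $1$-bounded entropy in the presence --- is already Proposition~4.5 of \cite{Hayes2018} (recorded in the paper as Fact~\ref{in the presence of the ultra}), so no new microstate-extension argument is required.
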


\begin{remark}

    We thank D. Shlyakhtenko  for pointing out to us that surface group von Neumann algebras are not strongly 1-bounded is already known through a computation of the free entropy dimension, which is an apriori stronger result (see paragraph below \cite{BDJ2008} Theorem 4.13). Roughly speaking, one sees that the hyperbolic surface groups are  decomposed as an iterated amalgamated free product over copies of $\mathbb{Z}$. Then, using the free entropy dimension estimate for amalgamated free products over hyperfinite subalgebras, which is the main technical result of \cite{BDJ2008} (Theorem 4.4), one can identify a generating set using an iterative process, whose microstates free entropy dimension has a precise lower bound (in the case of genus $g$, the lower bound is $2g-1$ which is significantly greater than 1). We would like to  point out that our proof not only applies to more groups but is conceptually different and softer. Indeed, on the von Neumann algebra level we only  use the fact that $L(F_2)$  is not strongly 1-bounded, and the 1-bounded entropy inequality (see Fact \ref{in the presence of the ultra}) whose proof as outlined in \cite{Hayes2018} Proposition 4.5 is quite elementary.      
\end{remark}

Potentially, there is a larger class of groups that admit an existential copy of $F_2$, however in light of Sela's classification of groups that are elementarily equivalent to free groups, finding more examples could be hard. Note also that there also is the famous class of existentially closed groups (see \cite{ECgroups}). We document a proof that the group von Neumann algebras of these groups are   on the other hand McDuff II$_1$ factors (see Proposition \ref{}).  

By virtue of being hyperbolic, many rigidity results are already known in the setting of Theorem \ref{maine} through Ozawa's biexactness techniques  and Popa's deformation rigidity (see \cite{OzawaSolidActa, OzPopaCartan, PopaVaesHyp, ChifanSinclair}). However, one obtains using non-strong 1-boundedness,    the following stronger rigidity results below: 

\begin{mcor}
    Let $N$ be the group von Neumann algebra of a hyperbolic tower over an $F_2$ subgroup. Then the following hold: 
    \begin{enumerate}
        \item $N$ cannot be written as the join of two strongly 1-bounded subalgebras (see a comprehensive list in \ref{s1b examples}) with diffuse intersection.
        \item $N$ contains no diffuse quasi regular strongly 1-bounded subalgebra.    
    \end{enumerate}
\end{mcor}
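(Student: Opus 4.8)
The corollary follows from Theorem A together with the established theory of $1$-bounded entropy developed by Jung and Hayes, so the plan is essentially to package known implications. Write $N = L(G)$ where $G$ is a hyperbolic tower over an $F_2$ subgroup, and recall from Theorem A that $N$ is not strongly $1$-bounded, i.e. $h(N) = \infty$ where $h(\cdot)$ denotes the $1$-bounded entropy (computed in some/any ambient tracial ultrapower). The key permanence properties I would invoke are: (i) if $P, Q \subseteq N$ are von Neumann subalgebras with $P \cap Q$ diffuse and both $h(P) < \infty$ and $h(Q) < \infty$, then $h(P \vee Q) < \infty$ — this is Hayes' generalization of Jung's theorem on two strongly $1$-bounded subalgebras with diffuse intersection; and (ii) if $P \subseteq N$ is diffuse and quasi-regular (or even just with diffuse quasi-normalizer) and $h(P) < \infty$, then $h(N) < \infty$. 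Both are proved in \cite{Jung2007, Hayes2018}.

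For part (1): suppose toward a contradiction that $N = P \vee Q$ with $P, Q$ strongly $1$-bounded (so $h(P), h(Q) < \infty$) and $P \cap Q$ diffuse. By permanence property (i), $h(N) = h(P \vee Q) < \infty$, contradicting Theorem A. For part (2): suppose $P \subseteq N$ is diffuse, quasi-regular, and strongly $1$-bounded, so $h(P) < \infty$; since $P$ is quasi-regular in $N$ its quasi-normalizer generates $N$, so by permanence property (ii), $h(N) < \infty$, again contradicting Theorem A. In both cases one should be slightly careful that the $1$-bounded entropy is computed with respect to a fixed generating tuple / inside a fixed ultrapower and that finiteness is independent of these choices — but this is exactly the content of Hayes' well-definedness results, which I would cite rather than reprove.

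The only genuine subtlety — and the one place I would spend a sentence or two — is making sure the hypotheses of the cited permanence theorems match the statement: specifically, that "strongly $1$-bounded" in the sense of the comprehensive list referenced in the statement is equivalent to (or implies) $h(\cdot) < \infty$, and that "quasi-regular" is enough to trigger property (ii) rather than needing full regularity. For the former, strong $1$-boundedness of a finitely generated algebra is by definition $h < \infty$ together with the requirement that some generating tuple have finite free entropy (in Jung's formulation), and Hayes shows $h(P) < \infty$ alone suffices in all these arguments; for non-finitely-generated $P$ one takes $h(P) = \sup$ over finitely generated subalgebras. For the latter, Hayes' monotonicity under quasi-normalizers (the inequality $h(P) \geq h(\text{qN}_N(P)'')$ when appropriately stated, or rather $h(N \,|\, M) \le h(P \,|\, M)$ for $P$ quasi-regular) is precisely what is needed. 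I do not anticipate any real obstacle here: the mathematical content is entirely in Theorem A, and the corollary is a formal consequence once the $1$-bounded entropy machinery is quoted correctly.
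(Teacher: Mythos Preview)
Your proposal is correct and matches the paper's approach exactly: the paper does not give a separate proof of this corollary, treating it as an immediate consequence of Theorem~A (which gives $h(N)=\infty$) together with the permanence properties of $1$-bounded entropy from \cite{Jung2007} and \cite{Hayes2018} that you cite. The introduction explicitly flags these two permanence results (Jung's join theorem and Hayes' quasi-regularity theorem) as the relevant inputs, so your packaging is precisely what is intended.
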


Our next  observation describes a  surprising structural property of  intermediate subalgebras of the diagonal embedding:

\begin{main}\label{T theorem}

For any ultrafilter $\mathcal{U}$ on any set $I$,  one cannot embed a property (T) von Neumann algebra $M$ into $L(F_2)^\mathcal{U}$  such the $M$ contains the diagonal copy of $L(F_2)$. 
    
\end{main}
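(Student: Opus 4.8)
The strategy is to reach a contradiction purely from the monotonicity of Hayes's $1$-bounded entropy $h(\,\cdot\,:\,\cdot\,)$ (\cite{Hayes2018}), fed by two external inputs: (i) every property (T) von Neumann algebra is finitely generated and strongly $1$-bounded --- a theorem in the $1$-bounded entropy framework, combining the finite generation of property (T) algebras with the bound $\le 1$ on the microstates free entropy dimension of property (T) factors and Jung's strong $1$-boundedness criterion (see \cite{Hayes2018} and the references therein); and (ii) the $1$-bounded entropy is insensitive to the diagonal embedding into an ultrapower, i.e.\ $h(P : M^{\mathcal{U}}) = h(P : M)$ for $P \subseteq M$ (again \cite{Hayes2018}). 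Specializing (ii) with $P = M = L(F_2)$ gives $h\big(L(F_2) : L(F_2)^{\mathcal{U}}\big) = h\big(L(F_2)\big)$, and since $L(F_2)$ is not strongly $1$-bounded the latter is $+\infty$.

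Now suppose toward a contradiction that $M$ is a property (T) von Neumann algebra with $L(F_2) \subseteq M \subseteq L(F_2)^{\mathcal{U}}$, the inner copy being diagonal. By (i), $M$ is finitely generated, hence has separable predual; moreover $M$ is automatically a II$_1$ factor, since its center lies in $L(F_2)' \cap L(F_2)^{\mathcal{U}} = \C$ (as $L(F_2)$ does not have property Gamma). Since $F_2$ is residually finite, hence sofic, $M \hookrightarrow L(F_2)^{\mathcal{U}}$ is Connes-embeddable, so all microstate spaces below are nonempty and the entropies lie in $[0,+\infty]$. By strong $1$-boundedness of $M$ we have $h(M) = h(M : M) < +\infty$. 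Since $L(F_2) \subseteq M$, monotonicity of $h$ in the subalgebra slot (a coordinate projection does not increase covering numbers of the microstate space of $M$) gives $h\big(L(F_2) : M\big) \le h(M : M) < +\infty$. On the other hand, since $M \subseteq L(F_2)^{\mathcal{U}}$, monotonicity of $h$ in the ambient slot (matricial approximations that survive extension to a larger ambient form a smaller set) gives $h\big(L(F_2) : M\big) \ge h\big(L(F_2) : L(F_2)^{\mathcal{U}}\big) = h\big(L(F_2)\big) = +\infty$. Chaining these yields $+\infty \le h\big(L(F_2) : M\big) \le h(M) < +\infty$, which is absurd; hence no such $M$ exists.

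The only substantive work is importing facts (i) and (ii); the combination above is then formal, and I expect (ii) --- the ultrapower-invariance of $1$-bounded entropy applied to the diagonal copy --- to be the load-bearing ingredient. One point needs a word of care: the statement is asserted for an arbitrary ultrafilter $\mathcal{U}$ on an arbitrary index set $I$, while the literature is usually phrased for free ultrafilters on $\N$. Fact (ii) is proved by an ultraproduct argument that only ever matches finitely many $*$-moments, each on a single well-chosen index, so it is uniform in $\mathcal{U}$ and applies verbatim; and in the degenerate case where $L(F_2)^{\mathcal{U}}$ coincides with the diagonal copy of $L(F_2)$ (e.g.\ $\mathcal{U}$ principal, or countably complete), the hypothesis forces $M = L(F_2)$, which does not have property (T). This disposes of all cases.
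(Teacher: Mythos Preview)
Your proof is correct and follows essentially the same route as the paper's. Both arguments combine the ultrapower invariance $h(L(F_2):L(F_2)^{\mathcal U})=h(L(F_2))=\infty$, monotonicity of $h$, the observation that $M$ must be a factor because $Z(M)\subset L(F_2)'\cap L(F_2)^{\mathcal U}=\C$ (non-Gamma), and strong $1$-boundedness of property~(T) factors; the only cosmetic difference is that you pass through the intermediate quantity $h(L(F_2):M)$ while the paper passes through $h(M:L(F_2)^{\mathcal U})$, which are equivalent applications of the same monotonicity fact.
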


We would like to point out that Theorem \ref{T theorem}    works if we just replace $L(F_2)$ by any finite von Neumann algebra $N$ with $h(N\cap M: N)=\infty$ (see the first paragraph of Section \ref{1bddentropy}). Also note that any $M$ here (regardless of whether $M$ has property (T) or not) will also not have any diffuse quasi regular amenable subalgebra.  We record below a question asked to us by J. Peterson, as a conjecture:    

\begin{conjecture}
Let $N$ be a non Gamma II$_1$ factor with  the Haagerup property. Then for any ultrafilter $\mathcal{U}$ on any set $I$,  one cannot embed a property (T) von Neumann algebra $M$ into $N^\mathcal{U}$  containing the diagonal embedding.
\end{conjecture}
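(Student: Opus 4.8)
The most promising route is a deformation/rigidity argument reducing the statement to Popa's intertwining-by-bimodules. Suppose, toward a contradiction, that $M$ has property (T), with $N\subseteq M\subseteq N^{\mathcal{U}}$ and $N$ the diagonal copy. Since $N$ is non-Gamma, $N'\cap N^{\mathcal{U}}=\mathbb C$, hence also $M'\cap N^{\mathcal{U}}=\mathbb C$ (as $M\supseteq N$), and in particular $M$ is a II$_1$ factor. The plan is to establish that $M\prec_{N^{\mathcal{U}}}N$. Granting this, Popa's theorem yields a finite $n$ and a normal $*$-homomorphism $\psi\colon M\to\mathbb M_n(N)$, implemented by a partial isometry $v$ with $xv=v\psi(x)$ for all $x\in M$ and $vv^*\in M'\cap N^{\mathcal{U}}=\mathbb C$; since $vv^*\neq 0$ we get $vv^*=1$, whence $\psi(x)=v^*xv$, and $\psi$ is injective because $M$ is a factor. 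Thus $M$ embeds into $\mathbb M_n(N)$ (with a trace-preserving conditional expectation onto the image, which exists since all algebras in sight are finite). As the Haagerup property is inherited by matrix amplifications and by von Neumann subalgebras with a conditional expectation, $M$ has the Haagerup property; together with property (T) this forces $M$ to be finite-dimensional, contradicting $M\supseteq N$. So everything hinges on proving $M\prec_{N^{\mathcal{U}}}N$.

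To reach this intertwining one would exploit the Haagerup property of $N$. Pick normal trace-preserving unital completely positive maps $\phi_n\colon N\to N$ with $\phi_n\to\id_N$ in $\|\cdot\|_2$ and each $\phi_n$ inducing a compact operator on $L^2(N)$. These induce unital completely positive maps $\widetilde{\phi}_n:=(\phi_n)^{\mathcal{U}}$ on $N^{\mathcal{U}}$, and one sets $\Phi_n:=E_M\circ\widetilde{\phi}_n|_M\colon M\to M$. Using that the $\phi_n$ are compact on $L^2(N)$, one would like to show that if $M\not\prec_{N^{\mathcal{U}}}N$ then, tested along a sequence of unitaries of $M$ escaping $N$ in Popa's sense, the $\Phi_n$ acquire the decay of a genuine Haagerup deformation of $M$; feeding this into property (T) of $M$ would manufacture a compact operator on $L^2(M)$ uniformly close to the identity, which is impossible for a II$_1$ factor, so that $M\prec_{N^{\mathcal{U}}}N$.

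The main obstacle is that this last step is far from automatic: $\widetilde{\phi}_n$ tends to the identity \emph{only on the diagonal} $N$, never on all of $N^{\mathcal{U}}$, so $\Phi_n$ need not converge to $\id$ even pointwise on $M$, and a Kazhdan set for $M$ will in general lie outside $N$ — hence property (T) of $M$ is not directly activated by the deformation. This is exactly where the non-Gamma hypothesis must be indispensable: $N'\cap N^{\mathcal{U}}=\mathbb C$ forces the directions in which $M$ protrudes from $N$ to be transverse to $N$, and the plan would be to couple this transversality with Popa's $\not\prec$ criterion so as to control precisely the directions that $\widetilde{\phi}_n$ does not see, quantitatively balancing the decay from compactness of the $\phi_n$ against the rigidity constants of $M$; making this balance precise is the heart of the matter. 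A softer partial alternative is available when $N$ is moreover Connes-embeddable, imitating the proof of Theorem~\ref{T theorem}: by monotonicity of the $1$-bounded entropy and the ultrapower-stability identity $h(N:N^{\mathcal{U}})=h(N)$ one obtains $h(M)\ge h(N:M)\ge h(N:N^{\mathcal{U}})=h(N)$, so — as property (T) von Neumann algebras are strongly $1$-bounded — the conjecture would reduce, in the embeddable case, to the clean but still open assertion that no non-Gamma II$_1$ factor with the Haagerup property is strongly $1$-bounded.
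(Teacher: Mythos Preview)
The statement you are attempting is recorded in the paper as a \emph{conjecture} (a question of J.~Peterson), not a theorem; the paper gives no proof and explicitly leaves it open, remarking only that Theorem~\ref{T theorem} (the special case $N=L(F_2)$) supplies partial evidence. There is therefore no paper proof to compare against.

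Your proposal is candid about this: it is a sketch of two strategies, each with a correctly identified gap, rather than a proof. In the deformation/rigidity route, your reduction to $M\prec_{N^{\mathcal U}}N$ is sound (the computation $M'\cap N^{\mathcal U}=\mathbb C$ and the consequences for the intertwining partial isometry are fine), but the obstruction you flag is genuine and not merely technical: the ultrapower maps $\widetilde{\phi}_n=(\phi_n)^{\mathcal U}$ converge to the identity only on the diagonal $N$, so there is no deformation of $M$ to which property~(T) of $M$ can be applied, and no known mechanism converts non-Gamma of $N$ into the missing convergence. Your $1$-bounded entropy alternative is exactly the inequality chain from the paper's proof of Theorem~\ref{T theorem}, and you correctly observe that it reduces the Connes-embeddable case of the conjecture to the open assertion that every non-Gamma Haagerup II$_1$ factor satisfies $h(N)=\infty$. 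In short, your write-up is a reasonable survey of available approaches and their obstacles, but it does not --- and does not claim to --- resolve the conjecture, and neither does the paper.
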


Note that J. Peterson's conjecture on ultrapowers (see Problem U.5 in \cite{JessePL}) implies the above conjecture. In particular Theorem \ref{T theorem}, can be seen as some evidence for the more general J. Peterson's conjecture.

We remark that in the realm of model theory of II$_1$ factors (see \cite{FHS2013, FHS2014a, FHS2014b}), $N$ being an intermediate subalgebra of $L(F_2)$ and its ultrapower  is a very well known situation.  This means that $N$ admits an \emph{existential embedding} of $L(F_2)$. Thus one of the features of this article is to emphasize  the link between model theory and strong 1-boundedness (see  also \cite{jekel}).

%We also document in this note a proof that the von Neumann algebras associated to existentially closed groups (see \cite{ECgroups})

\subsection*{Acknowledgements} The results in this article emerged from a discussion with J. Peterson. It is our pleasure to thank him and Vanderbilt University for their warm hospitality. We also sincerely thank S. André for his great assistance with some references, and guiding us through the theory of Sela's hyperbolic towers. We thank I. Chifan and T Sinclair for reading our draft carefully.

\section{Theories of groups}\label{existential theories}

We work in the language of groups. Here the  terms are words in the variables, their inverses, and the identity element (denoted by $1$). An atomic formula is two terms separated by ``$=$''.    Combining atomic formulae, logical connectors ($\lor$, $\land$, $\neg$) and quantifiers ($\forall$, $\exists$)  one describes first order formulae. When there are no free variables we call it a sentence   $\varphi$: for example, $\forall x_1\forall x_2 \exists x_3 (x_1^2x_2^3x_3=1)$.   The \emph{elementary theory} of a group  is the collection of sentences that hold true in the group.     

An existential sentence is a formula without free variables that can be expressed in the following way: $\sigma=\exists x_1 \cdots\exists x_n \ \varphi(x_1 \cdots x_n)$. The \emph{existential theory} of a group is the collection of existential sentences that hold true in the group. Note that one can also define a universal sentence, as a formula without free variables of the form $\forall   x_1 \cdots\forall x_n \ \varphi(x_1 \cdots x_n)$, and the \emph{universal theory} of a group as the collection of universal sentences that hold true in the group. 

\begin{remark}Observe that if $G$ and $H$ have the same universal theory, then they have the same existential theory because the negation of every universal sentence is an existential sentence.   

\end{remark}

An embedding of groups $\pi: H\to G$ is    said to be \emph{elementary} if for every first-order formula $\phi$ with $k$ free variables $x_1,\hdots x_k$ in the language of groups, and for every $k$-tuple $h_1,\hdots h_k\in H$, the statement $\phi(h_1,\hdots h_k)$ is true in $H$     if and only if the statement $\phi(\pi(h_1,\hdots h_k))$ is true in $G$. Similarly one defines an embedding to be \emph{existential} by replacing an arbitrary first-order formula $\phi$ with an existential first-order formula.

\section{Ultrapowers of groups and of von Neumann algebras} 

    Let $G$ be a countable group and let $1_G$ denote the identity element of $G$. For an ultrafilter $\mathcal U$ on a set $I$, we denote by $G^{\mathcal U}$ the algebraic ultraproduct: $\left(\prod_{I}G\right) /N$ where $N=\{(g_i): \{i\in I: g_i=1_G\}\in \mathcal U\}$. Note that $N$ is a normal subgroup of $\prod_{I}G$, hence $G^{\mathcal U}$ is a group. We have a natural diagonal inclusion $d_{\mathcal{U}}: G\to  G^{\mathcal U}$ given by $g\mapsto (g_i)_{\mathcal U}$, where $g_i=g$, for all $i\in I$.

The following are standard results in model theory, following from work of Kaisler-Shelah \cite{Keisler, Shelah}. For the sake of convenience assume the continuum hypothesis in this paper. 

\begin{fact}
\begin{enumerate}
\item \label{}  $G$ and $H$ are elementarily equivalent, if and only if exists an ultrafilter $\mathcal U$ on a set $I$  such that  $G^\mathcal{U}\cong H^\mathcal{U}$.  
\item \label{}  $\pi: H\to G$ is an elementary  embedding if and only if there exists an ultrafilter $\mathcal U$ on a set $I$   such that $\pi^{\mathcal{U}}: H^\mathcal{U}\to G^\mathcal{U}$ is an isomorphism.
\item \label{} $\pi: H\to G$ is an existential  embedding if and only if there exists an ultrafilter $\mathcal U$ on a set $I$  and an embedding $\rho: G\to H^{\mathcal{U}}$ such that $ \rho\circ  \pi= d_{\mathcal{U}}$.  
%and $H$ is existentially closed in $G$ then there exists an ultrafilter $\mathcal U$ on a set $I$  and an embedding of 
\end{enumerate}

\end{fact}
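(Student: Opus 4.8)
The plan is to deduce all three items from two classical inputs, which under the standing CH take a clean form: \L o\'s's theorem (for the ``if'' directions) and the Keisler--Shelah ultrapower isomorphism theorem (for the ``only if'' directions), the latter applied not only to the language of groups but also to its expansions by constants. Recall that for a countable structure $M$ and a good nonprincipal ultrafilter $\mathcal{U}$ on $\omega$ the diagonal embedding $d_{\mathcal{U}}\colon M\to M^{\mathcal{U}}$ is elementary and $M^{\mathcal{U}}$ is $\aleph_{1}$-saturated, hence by CH saturated of cardinality $\mathfrak{c}$; Keisler--Shelah asserts $M\equiv M'$ iff $M$ and $M'$ have isomorphic ultrapowers, and one may take the ultrafilter on a common index set. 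Item (1) is precisely this theorem for the language of groups: the ``only if'' direction is the theorem, and the ``if'' direction is immediate from \L o\'s, since $G\equiv G^{\mathcal{U}}\cong H^{\mathcal{U}}\equiv H$.

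For item (2) I would pass to the language $L_{H}$ obtained by adjoining to the language of groups a constant $c_{h}$ for each $h\in H$, viewing $H$ as an $L_{H}$-structure via $c_{h}\mapsto h$ and $G$ as an $L_{H}$-structure via $c_{h}\mapsto\pi(h)$. Directly from the definition, $\pi$ is an elementary embedding exactly when these two (still countable-language) structures are elementarily equivalent, i.e.\ both are models of the elementary diagram of $H$. Applying item (1) in $L_{H}$ then produces an ultrafilter $\mathcal{U}$ and an $L_{H}$-isomorphism of the ultrapowers; taking the reduct to the group language, one gets a group isomorphism $H^{\mathcal{U}}\cong G^{\mathcal{U}}$ carrying $d_{\mathcal{U}}(h)$ to $d_{\mathcal{U}}(\pi(h))$ for every $h\in H$ --- that is, an isomorphism $\Psi$ with $\Psi\circ d_{\mathcal{U}}^{H}=d_{\mathcal{U}}^{G}\circ\pi$, which is the content of item (2). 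Conversely, if $\pi^{\mathcal{U}}$ is an isomorphism then it is in particular elementary, and from the commuting identity $\pi^{\mathcal{U}}\circ d_{\mathcal{U}}^{H}=d_{\mathcal{U}}^{G}\circ\pi$ together with the elementarity (by \L o\'s) of the two diagonal maps, and using that $d_{\mathcal{U}}^{G}$ reflects the truth of formulas, $\pi$ must be elementary.

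For item (3), the ``if'' direction is a short unwinding: given an embedding $\rho\colon G\to H^{\mathcal{U}}$ with $\rho\circ\pi=d_{\mathcal{U}}$ (the diagonal $H\to H^{\mathcal{U}}$), and given an existential formula $\exists\bar{y}\,\psi(\bar{x},\bar{y})$ with $\psi$ quantifier-free and a tuple $\bar{h}$ from $H$, any witness in $G$ to $\exists\bar{y}\,\psi(\pi\bar{h},\bar{y})$ maps under $\rho$ --- an embedding of groups, hence preserving quantifier-free formulas --- to a witness in $H^{\mathcal{U}}$ to $\exists\bar{y}\,\psi(d_{\mathcal{U}}(\bar{h}),\bar{y})$, whence $H\models\exists\bar{y}\,\psi(\bar{h},\bar{y})$ because $d_{\mathcal{U}}$ is elementary; the reverse implication holds for any embedding. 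For ``only if'' I would run a compactness argument in the language $L$ obtained by adjoining a constant $e_{g}$ for each $g\in G$, so that $e_{\pi(h)}$ doubles as a constant for $h\in H$: consider the $L$-theory $T=\mathrm{eldiag}(H)\cup\mathrm{diag}(G)$, where $\mathrm{eldiag}(H)$ is the elementary diagram of $H$ written in the constants $\{e_{\pi(h)}\}_{h\in H}$ and $\mathrm{diag}(G)$ consists of the atomic and negated-atomic $L$-sentences true in $G$. A model of $T$ is exactly a structure $H^{*}$ with $H\preceq H^{*}$ equipped with an embedding $G\hookrightarrow H^{*}$ agreeing with $H\subseteq H^{*}$ through $\pi$. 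To see $T$ is finitely satisfiable, a finite fragment reduces to a single sentence $\Theta\in\mathrm{eldiag}(H)$ and a single quantifier-free $\Delta\in\mathrm{diag}(G)$ mentioning finitely many $g_{1},\dots,g_{m}\in G$; writing $g_{1},\dots,g_{k}$ for those lying in $\pi(H)$ (say $g_{i}=\pi(h_{i})$) and $g_{k+1},\dots,g_{m}$ for the rest, the sentence $\exists\bar{y}\,\Delta(\pi(h_{1}),\dots,\pi(h_{k}),\bar{y})$ is existential and true in $G$, hence --- because $\pi$ is an existential embedding --- true in $H$; interpreting $H$ as an $L$-structure with $e_{\pi(h)}\mapsto h$ and the constants $e_{g_{k+1}},\dots,e_{g_{m}}$ sent to witnesses in $H$ exhibits $H$ itself as a model of $\Theta\wedge\Delta$. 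Thus $T$ is consistent; by L\"owenheim--Skolem take a countable model, giving a countable elementary extension $H\preceq H^{*}$ containing a copy of $G$ over $\pi$. Finally, a countable elementary extension of $H$ embeds over $H$ into an $\aleph_{1}$-saturated ultrapower $H^{\mathcal{U}}$ --- realize one at a time the countably many types of elements of $H^{*}$ over the countable parameter sets built so far --- and composing $G\hookrightarrow H^{*}\hookrightarrow H^{\mathcal{U}}$ yields the desired $\rho$.

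The genuinely hard mathematics here is entirely imported: Shelah's half of the Keisler--Shelah theorem --- the construction of good, hence $\aleph_{1}$-saturating, ultrafilters --- is what does the work in (1) and (2) and, in the guise of saturation, in the last step of (3). The one point needing real care in one's own hands is the compactness argument in the forward direction of (3), precisely because $\pi$ is assumed only \emph{existential} rather than elementary: one must build the auxiliary theory $T$ out of only those consequences of the diagram of $H$ --- atomic facts and existential sentences --- that an existential embedding is guaranteed to transport to $G$, and it is exactly this restriction that makes the finite-satisfiability step go through.
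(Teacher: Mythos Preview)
The paper does not supply its own proof of this Fact: it is recorded as a standard consequence of the Keisler--Shelah theorem and simply cited. Your derivation is correct and follows the expected model-theoretic route --- \L o\'s for the easy directions, Keisler--Shelah applied in a diagram expansion for (2), and a compactness-plus-saturation argument for the existential characterization in (3); the finite-satisfiability step in (3) is handled carefully and is where the existentiality hypothesis is actually used.

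Two small remarks. In (2) you quietly prove the correct statement, namely that there is \emph{some} isomorphism $\Psi\colon H^{\mathcal{U}}\to G^{\mathcal{U}}$ with $\Psi\circ d_{\mathcal{U}}^{H}=d_{\mathcal{U}}^{G}\circ\pi$; the paper's literal phrasing --- that the ultrapower map $\pi^{\mathcal{U}}$ itself is an isomorphism --- can fail, since an elementary embedding between saturated models of the same cardinality need not be surjective (e.g.\ the shift on $\bigoplus_{\mathbb{N}}\mathbb{Z}/2\mathbb{Z}$). In (3) you silently correct the composition to $\rho\circ\pi=d_{\mathcal{U}}$, which is the only order that type-checks. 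Both adjustments are appropriate, and since the paper never invokes (2) and uses (3) only in this corrected form, nothing downstream is affected.
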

In this paper we will  be concerned with the objects $(M,\tau)$, tracial von Neumann algebras, i.e., a pair consisting of a von Neumann algebra $M$ and a faithful normal tracial state $\tau:M\rightarrow\mathbb C$. For any   group $G$, one denotes the group von Neumann algebra (see 1.3 of \cite{AP}) as $L(G)= \{u_g\}_{g\in G}''\subset \mathbb B(\ell^2 G)$. For any subgroup $H<G$, one has the natural inclusion of von Neumann algebras $L(H)\subset L(G)$.

For an ultrafilter $\mathcal U$ on a set $I$, we denote by $M^{\mathcal U}$ the tracial ultraproduct: the quotient $\ell^\infty(I,M)/\mathcal{J}$ by the closed ideal $\mathcal{J}\subset\ell^\infty(I,M)$  consisting of $x=(x_n)_{\mathcal U}$ with $\lim\limits_{n\rightarrow\mathcal U}\|x_n\|_2= 0$. We have the canonical trace on $M^{\mathcal U}$ given by $\tau^{\mathcal U}((x_n)_{\mathcal U})= \lim_{n\to \mathcal U} \tau(x_n)$.     
We have a natural diagonal inclusion $D_{\mathcal U}: M\to  M^{\mathcal U}$ given by $x\mapsto (x_n)_{\mathcal U}$, where $x_n=x$, for all $n\in I$.

\begin{lemma}\label{lemma main}
Let $G$ be a countable group, and $\mathcal U$ be an ultrafilter on a set $I$. Then there exists a unital $*$-homomorphism $\Theta: L(G^\mathcal{U})\to L(G)^{\mathcal{U}}$ such that $\Theta (L(d_{\mathcal{U}}(G)))= D_{\mathcal{U}}(L(G))$.   
\end{lemma}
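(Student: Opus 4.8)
The plan is to build $\Theta$ by hand at the level of the unitary group and then extend to the von Neumann algebra by a bounded-ness/ultraproduct argument. An element of $G^{\mathcal U}$ is represented by a sequence $(g_i)_{i\in I}$ with $g_i\in G$; the assignment $u_{(g_i)_{\mathcal U}}\mapsto (u_{g_i})_{\mathcal U}\in L(G)^{\mathcal U}$ is well defined on $G^{\mathcal U}$, because if $(g_i)$ and $(h_i)$ agree $\mathcal U$-a.e.\ then $u_{g_i}=u_{h_i}$ for $\mathcal U$-almost all $i$, hence $\|u_{g_i}-u_{h_i}\|_2\to_{\mathcal U}0$. This is a group homomorphism from $G^{\mathcal U}$ into the unitary group of $L(G)^{\mathcal U}$, so by the universal property of the group von Neumann algebra — more precisely, because any unitary representation of a group extends to a $*$-representation of its group algebra, and here the target is a finite von Neumann algebra so the trace-preservation gives the needed normality/continuity — it induces a unital $*$-homomorphism $\Theta\colon \C[G^{\mathcal U}]\to L(G)^{\mathcal U}$. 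First I would check that $\Theta$ is trace-preserving: $\tau^{\mathcal U}(\Theta(u_{(g_i)_{\mathcal U}})) = \lim_{i\to\mathcal U}\tau(u_{g_i})$, which is $1$ if $\{i: g_i=1_G\}\in\mathcal U$ (i.e.\ the element is trivial in $G^{\mathcal U}$) and $0$ otherwise; that is exactly $\tau_{L(G^{\mathcal U})}(u_{(g_i)_{\mathcal U}})$. A trace-preserving $*$-homomorphism between tracial von Neumann algebras is automatically $\|\cdot\|_2$-isometric on the algebraic level, hence extends from $\C[G^{\mathcal U}]$ to a normal unital $*$-homomorphism $\Theta\colon L(G^{\mathcal U})\to L(G)^{\mathcal U}$ (extend by $\|\cdot\|_2$-density, noting the extension is $\|\cdot\|_\infty$-bounded since a trace-preserving map of $*$-algebras into a finite factor is contractive in operator norm, e.g.\ by the GNS picture).

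Next I would verify the compatibility $\Theta(L(d_{\mathcal U}(G))) = D_{\mathcal U}(L(G))$. The diagonal $d_{\mathcal U}\colon G\to G^{\mathcal U}$ sends $g$ to the constant sequence $(g)_{\mathcal U}$, and $\Theta(u_{(g)_{\mathcal U}}) = (u_g)_{\mathcal U} = D_{\mathcal U}(u_g)$. Thus $\Theta$ carries the generators $u_g$ of $L(d_{\mathcal U}(G))$ to the generators $D_{\mathcal U}(u_g)$ of $D_{\mathcal U}(L(G))$; since $\Theta$ is a normal $*$-homomorphism it maps the von Neumann algebra generated by $\{u_{(g)_{\mathcal U}}: g\in G\}$ onto (the $\|\cdot\|_2$-closure, hence weak closure, of the $*$-algebra generated by) $\{D_{\mathcal U}(u_g):g\in G\}$, which is precisely $D_{\mathcal U}(L(G))$ because $D_{\mathcal U}$ is itself a normal embedding. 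A small point to be careful about: $L(d_{\mathcal U}(G))$ means the von Neumann subalgebra of $L(G^{\mathcal U})$ generated by the subgroup $d_{\mathcal U}(G)<G^{\mathcal U}$, and this coincides with the group von Neumann algebra $L(d_{\mathcal U}(G))\cong L(G)$ since $d_{\mathcal U}$ is an injective group homomorphism; I would note this identification explicitly.

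The only genuinely delicate point — and the step I expect to be the main obstacle — is the well-definedness and $\|\cdot\|_\infty$-boundedness needed to pass from $\C[G^{\mathcal U}]$ to $L(G^{\mathcal U})$, i.e.\ confirming that $\Theta$ really lands in $L(G)^{\mathcal U}$ and extends normally rather than merely being defined on the algebraic group algebra. The representation-theoretic extension to $\C[G^{\mathcal U}]$ is formal, but one must observe that a finite linear combination $\sum_j c_j u_{(g_i^{(j)})_{\mathcal U}}$ maps to $\big(\sum_j c_j u_{g_i^{(j)}}\big)_{\mathcal U}$, whose representing sequence is uniformly bounded in operator norm by $\sum_j|c_j|$, so it genuinely defines an element of $\ell^\infty(I,L(G))$ and descends to $L(G)^{\mathcal U}$; and that the map is $\|\cdot\|_2$-preserving by the trace computation above, so Kaplansky density (or just $L^2$-completeness of $L(G)^{\mathcal U}$) yields the normal extension. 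I do not expect surjectivity of $\Theta$ in general, and the statement does not ask for it — only the identity on the diagonal part — so no injectivity argument is needed. The whole proof is short; the content is organizing these standard facts in the right order.
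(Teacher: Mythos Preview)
Your proposal is correct and follows essentially the same approach as the paper: define $\theta\colon G^{\mathcal U}\to \cU(L(G)^{\mathcal U})$ by $(g_i)_{\mathcal U}\mapsto (u_{g_i})_{\mathcal U}$, check trace-preservation, and extend to $\Theta$ on $L(G^{\mathcal U})$. The paper's own proof is a terse three-line version of exactly this argument, so your added care about well-definedness, the $\|\cdot\|_2$-isometric extension from $\C[G^{\mathcal U}]$, and the diagonal compatibility simply fills in details the paper leaves implicit.
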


\begin{proof}
    Let $\theta: G^\mathcal{U}\to L(G)^{\mathcal{U}}$ be    the injective homomorphism given by $\theta((g_n)_{\mathcal{U}})=(u_{g_n})_{\mathcal{U}}$. Observe that $\tau^{\mathcal{U}}((u_{g_n})_{\mathcal{U}})= \lim_{n\to \mathcal{U}} \tau(u_{g_n})=0$. Hence it follows that $\theta$  extends to a unital   $*$-homomorphism $\Theta: L(G^\mathcal{U})\to L(G)^{\mathcal{U}}$ such that $\Theta (L(d_{\mathcal{U}}(G)))= D_{\mathcal{U}}(L(G))$.    
\end{proof}

\section{Existentially closed groups and their von Neumann algebras}

A group $G$ is said to be \emph{existentially closed} if every embedding of $G$ into a group $H$ is existential. Such groups exist, and have been studied extensively in group theory. See \cite{ECgroups} for a survey. Recall also that a II$_1$ factor $M$ is McDuff (see \cite{McD1, McD2}) if  the central sequence algebra, namely $M'\cap M^\mathcal{U}$ is non abelian. The McDuff property has been very useful in the study of model theory of II$_1$ factors recently, particularly in identifying elementary equivalence classes (see \cite{FarahHartSherman-MTOA3, FGL, GoldbringHart-McDuffTheories, BCI15, AGKE, superMcD}).

\begin{proposition}
    Let $G$ be an existentially closed group. Then, $L(G)$ is a McDuff II$_1$ factor.    
\end{proposition}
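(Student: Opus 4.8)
The plan is to verify the two assertions in turn: that $L(G)$ is a II$_1$ factor, and that it is McDuff. For the first I would show $G$ is ICC. Note first that $G\neq\{1\}$, since $\{1\}$ is not existentially closed: the sentence $\exists x\,\neg(x=1)$ fails in $\{1\}$ but holds in any nontrivial overgroup. Now fix $g\in G\setminus\{1\}$ and suppose toward a contradiction that its conjugacy class is finite, say $\{g_1,\dots,g_k\}$. Inside the free product $W:=G*\langle t\rangle\cong G*\mathbb{Z}$ the element $tgt^{-1}$ has reduced syllable length $3$, hence is distinct from each $g_i$ (a nontrivial element of the factor $G$, of syllable length $1$). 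So the existential formula $\exists x\,\bigwedge_{i=1}^{k}\neg(xgx^{-1}=g_i)$, with parameters $g,g_1,\dots,g_k\in G$, holds in $W$ (witnessed by $x=t$) but fails in $G$; this contradicts the fact that the inclusion $G\hookrightarrow W$ is an existential embedding. Hence $G$ is ICC and $L(G)$ is a II$_1$ factor (with separable predual when $G$ is countable).

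For the McDuff property I would apply existential closedness to the inclusion $j\colon G\hookrightarrow W:=G\times F_2$, $g\mapsto(g,1)$, which is therefore an existential embedding. By the third part of the Fact of Section 3 there are an ultrafilter $\mathcal U$ on a set $I$ and a group embedding $\rho\colon G\times F_2\to G^{\mathcal U}$ with $j^{\mathcal U}\circ\rho=d_{\mathcal U}$, the diagonal embedding of $W$ into $W^{\mathcal U}$. Since $j^{\mathcal U}$ is injective and both $\rho(g,1)$ and $d_{\mathcal U}(g)\in G^{\mathcal U}$ map under $j^{\mathcal U}$ to the constant family $(g,1)\in W^{\mathcal U}$, we get $\rho(g,1)=d_{\mathcal U}(g)$ for all $g\in G$; moreover each $\rho(1,w)$, $w\in F_2$, commutes with all $\rho(g,1)=d_{\mathcal U}(g)$ (as $(1,w)$ and $(g,1)$ commute in $W$), and $\rho$ is injective on $1\times F_2$. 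I would then push this through the $*$-homomorphism $\Theta\colon L(G^{\mathcal U})\to L(G)^{\mathcal U}$ of Lemma \ref{lemma main}, which sends the canonical unitary $u_{(g_n)_{\mathcal U}}$ to $(u_{g_n})_{\mathcal U}$ and $L(d_{\mathcal U}(G))$ onto $D_{\mathcal U}(L(G))$. The homomorphism $F_2\to\mathcal U\big(L(G)^{\mathcal U}\big)$, $w\mapsto\Theta(u_{\rho(1,w)})$, is a unitary representation; for $w\neq 1$ we have $\rho(1,w)\neq 1$ in $G^{\mathcal U}$, so, writing $\rho(1,w)=(z_n)_{\mathcal U}$, the set $\{n:z_n\neq 1_G\}$ belongs to $\mathcal U$ and hence $\tau^{\mathcal U}(\Theta(u_{\rho(1,w)}))=\lim_{n\to\mathcal U}\tau(u_{z_n})=0$. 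Thus this representation has the same trace as the left regular representation of $F_2$, so it extends to a trace-preserving, hence injective, unital embedding $L(F_2)\hookrightarrow L(G)^{\mathcal U}$. As $\Theta$ is multiplicative and $\rho(1,w)$ commutes with $\rho(g,1)=d_{\mathcal U}(g)$, the image of $L(F_2)$ commutes with every $D_{\mathcal U}(u_g)$, hence lies in $D_{\mathcal U}(L(G))'\cap L(G)^{\mathcal U}$. Identifying $L(G)$ with its diagonal copy, it follows that $L(G)'\cap L(G)^{\mathcal U}$ contains a unital copy of the II$_1$ factor $L(F_2)$ and is therefore non-abelian, so $L(G)$ is McDuff by McDuff's characterisation of McDuff factors.

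I do not expect a serious obstruction — once the correspondence of Section 3 is available this is largely a dictionary argument — but three points require care. First, $\Theta$ need not be injective, so one cannot transport the relation $ab\neq ba$ directly; the trace computation above is what certifies that the copy of $L(F_2)$ inside the ultrapower is faithful. Second, the identification $\rho(g,1)=d_{\mathcal U}(g)$ is exactly what places this copy in the relative commutant of the \emph{diagonal} $L(G)$, which is the subalgebra relevant to McDuff's criterion. Third, the final step invokes McDuff's theorem; this is cleanest for $G$ countable (so $L(G)$ has separable predual) and $\mathcal U$ an ultrafilter on $\mathbb{N}$, so to use the ultrafilter produced above one should either appeal to the version of McDuff's characterisation valid over an arbitrary index set, or first reflect the configuration down to a countable sub-situation.
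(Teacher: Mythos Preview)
Your proof is correct and broadly parallel to the paper's, but both halves differ in execution. For ICC, the paper argues in two steps: it first embeds $G\hookrightarrow G*\Z/k\Z$ (with $k=|g_1|$) and uses existential closedness to produce $h\in G$ of the same order as $g_1$ but outside the putative finite conjugacy class, then forms an HNN extension conjugating $g_1$ to $h$ and applies existential closedness again to force $h$ into that class, a contradiction; your single free-product-with-$\Z$ argument is cleaner and sidesteps the order bookkeeping. For McDuff, the paper embeds $G\hookrightarrow G\times(G*G)$ and, working directly from the \emph{definition} of existential embedding, extracts for each finite $F\subset G$ a pair $g_{F,1},g_{F,2}\in G$ centralising $F$ with $[g_{F,1},g_{F,2}]\neq1$; these assemble into noncommuting central sequences. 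You instead invoke the ultrapower characterisation (item (3) of the Fact in Section~3) together with Lemma~\ref{lemma main} to place a copy of $L(F_2)$ inside $L(G)'\cap L(G)^{\mathcal U}$ in one stroke. Your route is more structural and yields a stronger conclusion about the relative commutant, but leans on more of the paper's machinery; the paper's is entirely self-contained. One slip to correct: the identity $j^{\mathcal U}\circ\rho=d_{\mathcal U}$ (diagonal of $W$) cannot hold, since for $w\neq1$ the left side lands in $j^{\mathcal U}(G^{\mathcal U})=G^{\mathcal U}\times\{1\}\subset W^{\mathcal U}$; the correct reading of the Fact is $\rho\circ j=d_{\mathcal U}$ (diagonal of $G$), which already gives $\rho(g,1)=d_{\mathcal U}(g)$ without the detour, so your argument stands unchanged.
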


\begin{proof}
    Firstly, observe that every conjugacy class of $G$ is infinite. Indeed, by contradiction suppose there is a finite conjugacy class $\{g_1,\hdots g_n\}$. Then there exists an element $h$ of $G$ such that $h\neq g_i$ for any $i\in \{1,\hdots, n\}$, and $|h|= |g_1|$, i.e, they have the same order. Indeed, let $k=|g_1|$, and consider $G\rightarrow G*\mathbb{Z}/k\mathbb{Z}$ and apply that this is an existential embedding. Then, one sees that $h$ and $g_1$ are conjugate by considering the HNN extension and applying again the existential property.     Secondly, consider the embedding of $\pi: G \rightarrow G\times (G*G)$ given by $g\mapsto (g,1_{G*G})$. Since $G$ is existentially closed, $\pi$ is existential. Hence for any finite set $F\subset G$, there exists $g_{F,1},g_{F,2}\in G$ such that $[F,g_{F,i}]=1_{G}$ for all $i\in \{1,2\}$ and $[g_{F,1},g_{F,2}]\neq 1_G$. This implies that $L(G)$ is McDuff. Indeed fix $\mathcal{U}$ a non principal ultrafilter on $\mathbb{N}$. Let $F_n$ be an increasing family of finite subsets of $G$ such that $\bigcup_{n}F_n=G$.  Then see that $(g_{F_n,i})_{\mathcal{U}}\in L(G)'\cap  L(G)^{\mathcal{U}}$ for all $i\in \{1,2\}$ and also $[(g_{F_n,1})_{\mathcal{U}},     (g_{F_n,2})_{\mathcal{U}}]\neq 0$. Hence we are done.          
\end{proof}

A separable tracial von Neumann algebra $N$ is called \emph{existentially closed} (see \cite{ECfactors}) if for  unital inclusion $N\subset M$ into a von Neumann separable tracial von Neumann algebra $M$, there exists an ultrafilter $\mathcal{U}$ on a set $I$ and an embedding $\pi: M\to N^\mathcal{U}$ such that $\pi$ restricted to  $N$ coincides with $D_{\mathcal{U}}(N)$.  It is well known that existentially closed separable tracial von Neumann algebra exist and are McDuff (see \cite{ECMcDuff}). In the context of the above Proposition, it is natural to wonder if $L(G)$ is an existentially closed tracial von Neumann algebra, when $G$ is an existentially closed group.

\section{1-bounded entropy}\label{1bddentropy}
For a finite tuple $X$ of self-adjoint operators in a tracial von Neumann algebra $(M,\tau)$, one has the {\it $1$-bounded entropy } $h(X)$, implicit in Jung's work \cite{Jung2007} and defined explicitly by Hayes \cite{Hayes2018}. It is the exponential growth rate of the covering numbers of Voiculescu's microstate spaces (see \cite{VoiculescuFreeEntropy2}) up to unitary conjugation.  For an inclusion $N\subset M$ of tracial von Neumann algebras, the \emph{$1$-bounded entropy of $N$ in the presence of $M$}, denoted $h(N:M)$, is defined by modifying the definition of $1$-bounded entropy to only measure the size of the space of microstates for $N$ which have an extension to microstates for $M$. See Section 2.2 and 2.3 in \cite{HJKE1} for a detailed and rigorous exposition. Note that $h(X_1:M)=h(X_2:M)$ if $X_1''=X_2''$ by \cite[Theorem A.9]{Hayes2018}. Hence, given a von Neumann subalgebra $N\subset M$, we unambiguously write $h(N:M)$ (and call it the {\it $1$-bounded entropy of $N$ in the presence of $M$}) to be $h(X:Y)$ for some generating sets $X$ of $N $ and $Y$ of $M$. 
We write $h(M)=h(M:M)$ and call it the {\it $1$-bounded entropy of $M$}.
 %In words, $h$ is the exponential growth rate of the covering numbers of Voiculescu's microstate spaces (see \cite{VoiculescuFreeEntropy2}) up to unitary conjugation.  For an inclusion $N\leq M$ of tracial von Neumann algebras, the \emph{$1$-bounded entropy of $N$ in the presence of $M$}, denoted $h(N:M)$, is defined by modifying the definition of $1$-bounded entropy to only measure the size of the space of microstates for $N$ which have an extension to microstates for $M$. See Section 2.2 and 2.3 in \cite{HJKE1} for a detailed and rigorous exposition.
 
 For the purposes of this article we recall the following facts about $h$: 
 
 \begin{fact}(see  \cite[2.3.3]{HJKE1})\label{fact 1}
 $h(N_{1}:M_{1})\leq h(N_{2}:M_{2})$ if $N_{1}\subset N_{2}\subset M_{2}\subset M_{1}$ and $N_{1}$ is diffuse.
 \end{fact}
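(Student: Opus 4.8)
The plan is to factor the desired inequality through the intermediate quantity $h(N_2 : M_1)$ and establish the two bounds
\[
h(N_1 : M_1)\ \le\ h(N_2 : M_1)\ \le\ h(N_2 : M_2)
\]
separately, each being an instance of a soft monotonicity property of Voiculescu's microstate spaces in the presence. First I would fix a compatible chain of self-adjoint generating tuples: $X_1$ generating $N_1$, $X_2=(X_1,W)$ generating $N_2$, $Y_2=(X_2,V)$ generating $M_2$, and $Y_1=(Y_2,U)$ generating $M_1$. Since $N_1$ is diffuse — hence so are $N_2$, $M_2$ and $M_1$ — the values $h(N_i:M_j)$ above do not depend on these choices by \cite[Theorem A.9]{Hayes2018}, so it is enough to compare the corresponding orbital covering numbers of the microstate spaces $\Gamma_R(\,\cdot:\cdot\,;n,\ell,\gamma)$ with a fixed operator-norm cutoff $R$.

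For the first inequality, let $\pi$ be the coordinate projection that discards the $W$-entries. Then $\pi$ maps $\Gamma_R(X_2:Y_1;n,\ell,\gamma)$ \emph{onto} $\Gamma_R(X_1:Y_1;n,\ell,\gamma)$: a matricial microstate for $X_1$ that extends to a microstate for $Y_1$ is, by definition, the $\pi$-image of the restriction to the $X_2$-coordinates of such an extension. Moreover $\pi$ is $1$-Lipschitz for the normalized $2$-norm and equivariant for the conjugation action of the unitary group $U(n)$. Hence a covering of $\Gamma_R(X_2:Y_1;n,\ell,\gamma)$ by $k$ unitary orbits of $\eps$-balls pushes forward to a covering of $\Gamma_R(X_1:Y_1;n,\ell,\gamma)$ by $k$ such orbits, so $K_\eps\big(\Gamma_R(X_1:Y_1;n,\ell,\gamma)\big)\le K_\eps\big(\Gamma_R(X_2:Y_1;n,\ell,\gamma)\big)$. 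Taking $\tfrac{1}{n^2}\log$, then $\limsup_{n}$, then the infimum over $\ell,\gamma$ and the supremum over $\eps$, yields $h(N_1:M_1)\le h(N_2:M_1)$.

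For the second inequality I would use that $Y_2$ is a sub-tuple of $Y_1$: every microstate in $\Gamma_R(Y_1;n,\ell,\gamma)$ restricts — after discarding the $U$-coordinates, at worst at the cost of enlarging $\ell$ and shrinking $\gamma$, which is absorbed by the infimum in the definition of $h$ — to a microstate in $\Gamma_R(Y_2;n,\ell',\gamma')$. Therefore $\Gamma_R(X_2:Y_1;n,\ell,\gamma)\subseteq\Gamma_R(X_2:Y_2;n,\ell',\gamma')$, and covering numbers are monotone under inclusion, so the same chain of $\limsup$/$\inf$/$\sup$ operations gives $h(N_2:M_1)\le h(N_2:M_2)$. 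Concatenating the two bounds proves the Fact.

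The only point requiring care is the bookkeeping of the nested quantifiers $\sup_{\eps>0}\inf_{\ell\in\N,\,\gamma>0}\limsup_{n\to\infty}$ and the precise definition of ``covering number up to unitary conjugation'', so that the set-level surjection (first inequality) and inclusion (second inequality) really do descend to inequalities between the invariants; once the definitions in \cite[Section 2]{HJKE1} are unwound this is routine, and the diffuseness hypothesis on $N_1$ serves only to make $h(N_1:M_1)$ a well-defined invariant of the inclusion $N_1\subseteq M_1$ in the first place.
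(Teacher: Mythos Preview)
The paper does not give a proof of this Fact: it is stated with a bare citation to \cite[2.3.3]{HJKE1}, so there is no argument in the paper to compare yours against. That said, your sketch is correct and is precisely the standard argument one finds in the cited reference. Factoring through $h(N_2:M_1)$ and using, respectively, the equivariant $1$-Lipschitz surjection (for monotonicity in the first argument) and the inclusion of microstate spaces (for anti-monotonicity in the second) is exactly how the two halves go; the diffuseness of $N_1$ is needed only so that \cite[Theorem A.9]{Hayes2018} applies and the quantities are well-defined invariants of the inclusions. One small remark: in your second step the restriction of a $Y_1$-microstate at parameters $(\ell,\gamma)$ to the $Y_2$-coordinates is already a $Y_2$-microstate at the \emph{same} parameters, since monomials in $Y_2$ are in particular monomials in $Y_1$; so the adjustment of $(\ell,\gamma)$ you allow for is in fact unnecessary.
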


   \begin{fact}(see \cite[Proposition 4.5]{Hayes2018})\label{in the presence of the ultra}
 $h(N:M)=h(N:M^{\mathcal U})$ if $N\subset M$ is diffuse, and $\mathcal U$ is an ultrafilter on a set $I$. (Note that \cite[Proposition 4.5]{Hayes2018} asserts this fact for free ultrafilters $\mathcal U$. The fact is trivially true also for non-free (i.e., principal) ultrafilters.) \end{fact}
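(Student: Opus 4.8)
The plan is to prove the two inequalities separately, with the nontrivial content lying entirely in the bound $h(N:M)\le h(N:M^{\mathcal U})$. Throughout I may assume $N$ is generated by a single finite self-adjoint tuple $X$: the general case follows by taking the supremum over finite tuples $X\subset N$ in the defining formula for the $1$-bounded entropy in the presence (see \cite{HJKE1}, Sections 2.2--2.3), together with the invariance $h(X:M)=h(X'':M)$ recorded after \cite[Theorem A.9]{Hayes2018}. Since $N$ is diffuse we may arrange that $X''$ is diffuse as well.

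First I would dispose of the easy inequality. The diagonal embedding $D_{\mathcal U}$ realizes the chain $X''\subset M\subset M^{\mathcal U}$, so applying Fact \ref{fact 1} with $N_{1}=N_{2}=X''$, $M_{2}=M$ and $M_{1}=M^{\mathcal U}$ (legitimate because $X''$ is diffuse) gives directly $h(X:M^{\mathcal U})\le h(X:M)$.

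For the reverse inequality I would unwind the definition $h(X:M^{\mathcal U})=\inf_{Z}h(X:(X,Z))$, where $Z$ runs over finite tuples drawn from $M^{\mathcal U}$ and $h(X:(X,Z))$ is the growth rate of the orbital covering numbers of the projection onto the $X$-coordinates of the joint microstate spaces $\Gamma(X,Z;\ell,\eps)$. Fix such a $Z$ and choose a representative $Z=(z_i)_{\mathcal U}$ with each $z_i$ a finite tuple in $M$; after truncating the representatives I may assume $\sup_i\|z_i\|<\infty$, so that all the relevant microstates live in a common bounded region. The defining property of the tracial ultraproduct is that the $*$-moments of $(X,Z)$ are the $\mathcal U$-limits of the $*$-moments of $(X,z_i)$. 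Hence for every degree $\ell$ and tolerance $\eps>0$ the set of indices $i$ for which the moments of $(X,z_i)$ agree with those of $(X,Z)$ to within $\eps$ up to degree $\ell$ belongs to $\mathcal U$, in particular is nonempty. For any such $i$ the triangle inequality on moments yields the inclusion $\Gamma(X,z_i;\ell,\eps)\subseteq\Gamma(X,Z;\ell,2\eps)$; projecting to the $X$-coordinates and comparing orbital covering numbers at every scale $\delta$ then gives $h(X:(X,z_i))\le h(X:(X,Z))$. Since $z_i$ is a finite tuple in $M$, the left-hand side is bounded below by $h(X:M)=\inf_{Y\subset M}h(X:(X,Y))$, whence $h(X:M)\le h(X:(X,Z))$. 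Taking the infimum over $Z$ yields $h(X:M)\le h(X:M^{\mathcal U})$, and combined with the easy inequality this gives the claimed equality; the case of a principal ultrafilter is trivial, since then $M^{\mathcal U}=M$.

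The step I expect to require the most care is the passage from the moment-level inclusion $\Gamma(X,z_i;\ell,\eps)\subseteq\Gamma(X,Z;\ell,2\eps)$ to the inequality of $1$-bounded entropies. One must verify that projecting onto the $X$-coordinates and reading off the growth rates of the orbital covering numbers respects this inclusion through the nested limits (supremum over scales $\delta$, infimum over $(\ell,\eps)$, and $\limsup$ over the matrix size $n$) defining $h(X:(X,\cdot))$; the doubling $\eps\mapsto 2\eps$ is harmless because $(\ell,\eps)$ and $(\ell,2\eps)$ index the same cofinal family of tolerances. A secondary point is to confirm that the representative $(z_i)$ can be chosen with uniformly bounded operator norms, so that the microstates for $(X,z_i)$ and for $(X,Z)$ are taken over comparable bounded regions.
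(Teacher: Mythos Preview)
The paper does not supply its own proof of this fact; it is simply cited from \cite[Proposition 4.5]{Hayes2018}, so there is nothing in the paper to compare against. Your sketch is the standard argument, and the overall strategy---approximating a tuple $Z\in M^{\mathcal U}$ by its representatives $z_i\in M$ at the level of moments---is exactly the right one.

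There is, however, a genuine quantifier slip in the middle of your argument. From the single inclusion $\Gamma(X,z_i;\ell,\eps)\subseteq\Gamma(X,Z;\ell,2\eps)$ you cannot conclude $h(X:(X,z_i))\le h(X:(X,Z))$ for that particular $i$: both sides are infima over \emph{all} $(\ell',\eps')$, whereas your inclusion holds only at the specific $(\ell,\eps)$ used to select $i$, and $i$ varies with $(\ell,\eps)$. No single index $i$ need witness the inequality of the $1$-bounded entropies. You correctly flag this passage as delicate, but the issue is not the harmless doubling $\eps\mapsto 2\eps$; it is precisely the dependence $i=i(\ell,\eps)$.

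The fix is to avoid that intermediate claim. Write $G_\delta(\ell,\eps,Y)$ for the $\limsup_n n^{-2}\log$ of the orbital $\delta$-covering number of the $X$-projection of $\Gamma_n(X,Y;\ell,\eps)$, so that $h(X:M)=\sup_\delta\inf_{Y,\ell,\eps}G_\delta(\ell,\eps,Y)$ with the infimum over finite $Y\subset M$. For each $\delta$ and each $(\ell,\eps)$, choose $i=i(\ell,\eps)$ as you did; then
\[
\inf_{Y,\ell',\eps'}G_\delta(\ell',\eps',Y)\;\le\;G_\delta(\ell,\eps,z_{i})\;\le\;G_\delta(\ell,2\eps,Z).
\]
The left-hand side is independent of $(\ell,\eps)$, so taking the infimum over $(\ell,\eps)$ on the right and then the supremum over $\delta$ gives $h(X:M)\le h(X:(X,Z))$ directly, without ever asserting $h(X:(X,z_i))\le h(X:(X,Z))$ for a fixed $i$. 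With this adjustment your proof is correct.
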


\begin{fact}(see \cite[Lemma 3.7]{Jung2007})) $h(N_1*N_2)=\infty $ where $(N_1,\tau_1)$ and $(N_2,\tau_2)$ are Connes-embeddable diffuse tracial von Neumann algebras. In particular $h(L(F_2))=\infty$. \end{fact}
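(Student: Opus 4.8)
The plan is to recognize this as Hayes's reformulation of Jung's Lemma 3.7: a finitely generated tracial von Neumann algebra $M$ is strongly $1$-bounded exactly when $h(M)<\infty$, so the assertion is that a free product of two diffuse Connes-embeddable algebras fails to be strongly $1$-bounded, equivalently $h(N_1*N_2)=\infty$. I would prove this by estimating from below the covering numbers (up to unitary conjugation) of Voiculescu's microstate spaces for a generating tuple of $N_1*N_2$, showing they grow at least like $\exp(cn^2)$ in the matrix dimension $n$, which forces $h(N_1*N_2:N_1*N_2)=\infty$. The stated special case $h(L(F_2))=\infty$ is then immediate, since $L(F_2)=L(\Z)*L(\Z)$ and $L(\Z)$ is diffuse and amenable, hence Connes-embeddable; alternatively, for that case one may instead quote Voiculescu's computation \cite{VoiculescuFreeEntropy2} that two free semicirculars have free entropy dimension $2$ together with Jung's \cite{Jung2007} implication that free entropy dimension $>1$ obstructs strong $1$-boundedness.

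To build the large family of microstates I would use Voiculescu's asymptotic freeness theorem. Fix finite self-adjoint generating tuples $X$ of $N_1$ and $Y$ of $N_2$, and, using Connes-embeddability, fix microstate sequences $\xi_n\to X$ and $\eta_n\to Y$ consisting of tuples of self-adjoint $n\times n$ matrices. For $U$ Haar-distributed on $U(n)$, the pair $(\xi_n,\,U\eta_nU^{*})$ is, with probability tending to $1$, a good microstate at dimension $n$ for the joint distribution of $X\cup Y$ inside $N_1*N_2$: asymptotic freeness, together with concentration of measure on $U(n)$, makes the mixed moments of $\xi_n$ and $U\eta_nU^{*}$ converge to the free ones. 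Since $X\cup Y$ generates $N_1*N_2$, the microstate space for $N_1*N_2$ therefore (essentially) contains the orbit $\Omega_n=\{(\xi_n,\,U\eta_nU^{*}):U\in U(n)\}$.

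The remaining, and delicate, point is to bound below the covering number of $\Omega_n$ up to \emph{simultaneous} unitary conjugation, and this is where both diffuseness hypotheses enter. Because $N_2$ is diffuse, one may take $Y$ to contain a Haar unitary of $N_2$, so $\eta_n$ has approximately $n$ distinct eigenvalues and its $U(n)$-orbit is genuinely $\sim n^2$-dimensional; hence $\Omega_n$ requires $\exp(cn^2)$ balls of a fixed radius to $\|\cdot\|_2$-cover it. On the other hand, a simultaneous conjugation $W\in U(n)$ that carries a point of $\Omega_n$ to within small distance of $\Omega_n$ must approximately fix the anchor coordinate $\xi_n$, hence lies in the $\eps$-approximate commutant of $\xi_n$; taking $X$ to contain a Haar unitary of the diffuse algebra $N_1$, that approximate commutant is, up to a small perturbation, contained in a maximal torus, of dimension only $n$. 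So passing to the quotient by simultaneous conjugation reduces the count by at most a factor $\exp(O(n\log n))$, leaving covering number $\exp(\Omega(cn^2))$; thus $h(N_1*N_2)=\infty$.

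The main obstacle I expect is precisely this last step: showing the simultaneous-conjugation action cannot collapse the family $\Omega_n$, which forces one to choose the generating tuples so that the anchor coordinate $\xi_n$ has a small approximate commutant, and to quantify ``small'' sharply enough to beat the $n^2$ growth. A secondary technical point is making the ``with high probability a microstate for $N_1*N_2$'' step sufficiently uniform (in moment order and tolerance) that $\Omega_n$ genuinely lies, or lies $\|\cdot\|_2$-close to, the relevant microstate spaces; this is routine given Voiculescu's theorem and Lévy concentration on $U(n)$, but it is exactly where Connes-embeddability of $N_1$ and $N_2$ is consumed.
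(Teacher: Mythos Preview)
The paper does not prove this statement at all: it is recorded as a \emph{Fact} with a bare citation to \cite[Lemma 3.7]{Jung2007}, so there is nothing in the paper to compare your argument against. Your proposal is therefore not so much an alternative as a sketch of the proof that the paper elects to outsource.

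That said, your sketch is essentially the standard argument and is correct in outline. The strategy---produce an $\exp(cn^2)$-large family of microstates for $X\cup Y$ via Haar-random conjugation and asymptotic freeness, then argue that modding out by simultaneous conjugation costs only $\exp(o(n^2))$ because the anchor tuple $\xi_n$ has small approximate commutant---is exactly Jung's. You have correctly identified where each hypothesis is used: Connes-embeddability to get $\xi_n,\eta_n$ at all, diffuseness of $N_2$ so the orbit $U\mapsto U\eta_nU^{*}$ is genuinely high-dimensional, and diffuseness of $N_1$ so the approximate commutant of $\xi_n$ is small. The one place to be careful is the quantitative control of that approximate commutant: you want a covering-number bound of the form $\exp(o(n^2))$ (or more precisely $\exp(Cn\log(1/\eps))$ for an $\eps$-tube around the commutant of a near-Haar matrix), and this is where the actual work in Jung's paper lies. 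Your alternative route to the special case $h(L(F_2))=\infty$ via $\delta_0=2$ and Jung's implication is also valid and is indeed softer for that particular example.
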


\begin{fact}(see \cite[Proposition A.16] {Hayes2018}))\label{in  presence of the ultra} $N$ is strongly 1-bounded in the sense of Jung (\cite{Jung2007}) if and only if $h(N)<\infty$.\end{fact}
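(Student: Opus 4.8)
The plan is to prove the equivalence by rewriting both invariants as exponential growth rates of covering numbers of Voiculescu's microstate spaces and then tracking the single feature that distinguishes them: whether one counts covers in $M_k(\C)^n_{sa}$ directly, or only up to conjugation by the unitary group $U(k)$. First I would fix self-adjoint generators $X=(x_1,\dots,x_n)$ of $N$, a radius $R\ge\max_i\|x_i\|$, and abbreviate the microstate spaces by $\Gamma(m,k,\gamma):=\Gamma_R(X;m,k,\gamma)$. For $\eps>0$ let $K_\eps(\Omega)$ be the minimal number of $\|\cdot\|_2$-balls of radius $\eps$ covering $\Omega$, and let $K_\eps^{\mathrm{orb}}(\Omega)$ be the minimal number of $\eps$-neighborhoods of conjugation orbits $U(k)\cdot\xi:=\{U\xi U^*:U\in U(k)\}$ covering $\Omega$. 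With this notation Hayes's invariant is $h(X)=\sup_{\eps>0}\inf_{m,\gamma}\limsup_k\frac1{k^2}\log K_\eps^{\mathrm{orb}}(\Gamma(m,k,\gamma))$, while Jung's strong $1$-boundedness is the assertion that the ordinary covering entropy $\mathbb K_\eps(X):=\inf_{m,\gamma}\limsup_k\frac1{k^2}\log K_\eps(\Gamma(m,k,\gamma))$ obeys $\limsup_{\eps\to0}\big(\mathbb K_\eps(X)-\log\tfrac1\eps\big)<\infty$ (this is the numerical content of Jung's definition, carried together with the standing regularity that some generator has finite free entropy). So the whole problem reduces to comparing $K_\eps$ with $K_\eps^{\mathrm{orb}}$.

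The bridge is the geometry of the conjugation action, whose orbits live inside a group of real dimension $k^2$, matching the normalization $\tfrac1{k^2}$. The always-valid half is the upper bound: a single orbit is covered by at most $(C/\eps)^{k^2}$ balls of radius $\eps$ (the covering number of $U(k)$ at scale $\eps$), so any orbital $\eps$-cover refines to an ordinary cover at cost this overhead, giving
\[
\tfrac1{k^2}\log K_\eps(\Omega)\ \le\ \tfrac1{k^2}\log K_\eps^{\mathrm{orb}}(\Omega)+\log\tfrac1\eps+\log C .
\]
Taking $\inf_{m,\gamma}\limsup_k$ yields $\mathbb K_\eps(X)-\log\tfrac1\eps\le h(X)+\log C$ for every $\eps$. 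Hence $h(N)<\infty$ forces Jung's corrected quantity to be finite, and in particular $\delta_0(X)=\limsup_{\eps\to0}\mathbb K_\eps(X)/\log\tfrac1\eps\le1$; this settles the direction $h(N)<\infty\Rightarrow N$ strongly $1$-bounded (the finite-entropy generator can be arranged by including in $X$ a self-adjoint element with spread-out distribution, using that $h$ is independent of the generating set).

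For the converse I would prove the reverse comparison $K_\eps^{\mathrm{orb}}(\Omega)\cdot\inf_{\xi\in\Omega}K_{2\eps}(U(k)\cdot\xi)\ \lesssim\ K_\eps(\Omega)$, expressing that an ordinary cover must already spend roughly $(c/\eps)^{k^2}$ balls on each well-separated orbit it meets, so the ordinary covering number pays for the $k^2$ orbit directions on its own. Feeding in a lower bound of the form $K_{2\eps}(U(k)\cdot\xi)\ge(c/\eps)^{k^2-o(k^2)}$ and passing to growth rates then converts $\limsup_{\eps\to0}(\mathbb K_\eps(X)-\log\tfrac1\eps)<\infty$ into $h(X)<\infty$, completing the equivalence.

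The main obstacle is exactly this lower bound on the volumes of the unitary orbits of the microstates. It genuinely fails for degenerate tuples: near-scalar microstates have tiny orbits, for which $K_\eps^{\mathrm{orb}}\approx K_\eps\to\infty$, so that $h$ could be infinite even while Jung's corrected quantity is finite. This is precisely where the regularity in Jung's definition is indispensable: the hypothesis that some generator $x_{i_0}$ has finite free entropy forces its microstates to have distinct, well-separated eigenvalues, so the single-coordinate orbit already has dimension $k^2-k$, and hence the joint orbit has dimension $k^2-o(k^2)$ and covering number $(c/\eps)^{k^2-o(k^2)}$. Quantifying this — showing that the bulk of $\Gamma(m,k,\gamma)$ consists of nondegenerate orbits with controlled orbit volume, and that these orbits are sufficiently separated — is the technical heart, and is supplied by Jung's volume-of-unitary-orbit estimates in \cite{Jung2007}. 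Assembling those estimates into the two-sided comparison above yields the stated equivalence.
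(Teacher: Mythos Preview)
The paper does not prove this statement: it is recorded as a Fact with a bare citation to \cite[Proposition~A.16]{Hayes2018}, so there is nothing in the present paper to compare your proposal against.

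That said, your sketch is essentially the argument carried out in Hayes's appendix. Both directions rest on comparing ordinary covering numbers of the microstate space with orbital covering numbers (equivalently, covering numbers of the quotient by the $U(k)$-conjugation action), the discrepancy being governed by the $\approx(C/\eps)^{k^{2}}$ covering number of a single generic orbit. You correctly isolate the easy direction ($h(N)<\infty\Rightarrow$ strongly $1$-bounded, via the upper bound on orbit covering numbers) and the hard direction, where an orbit-volume \emph{lower} bound is required and can fail for degenerate microstates; and you correctly locate the role of the finite-free-entropy generator in Jung's definition as exactly what excludes such degeneracy. One caution: your displayed ``reverse comparison'' $K_\eps^{\mathrm{orb}}(\Omega)\cdot\inf_{\xi}K_{2\eps}(U(k)\cdot\xi)\lesssim K_\eps(\Omega)$ is not literally valid as a pointwise covering inequality, since a single $\eps$-ball can meet many nearby orbits; the actual argument proceeds via a packing/volumetric count rather than a direct covering bound. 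You already flag this step as ``the technical heart'' and defer to Jung's orbit-volume estimates, which is the right move at the level of a sketch.
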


The following are examples of strongly 1-bounded von Neumann algebras:

\begin{fact} \label{s1b examples}
    \begin{enumerate}
        \item Diffuse amenable tracial von Neumann algebras (see \cite{Jung2007}).
        \item Tensor products of two diffuse tracial von Neumann algebras (see \cite{Jung2007}). 
        \item Tracial von Neumann algebras that have a diffuse hyperfinite quasi regular subalgebra (see Theorem 3.8 in \cite{Hayes2018}). 
        \item \label{Kzahdan}Tracial von Neumann algebras that have a Kazhdan set (see \cite{HJKE1}). This includes all property (T) II$_1$ factors, and property (T) group von Neumann algebras.
        \item Group von Neumann algebras of groups that have vanishing first $L^2$ Betti number and are sofic and finitely presented (see \cite{JungL2B, Shl2015, HJKE2}). 
        \item Free orthogonal quantum group von Neumann algebras (see \cite{BrannanVergnioux}). 
        \item The non Gamma factors  constructed in \cite{fullee} and their ultrapowers.
    \end{enumerate}
    
\end{fact}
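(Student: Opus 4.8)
The plan is to derive every item from the single characterization that $N$ is strongly $1$-bounded precisely when its $1$-bounded entropy is finite, $h(N)<\infty$ (\cite[Proposition A.16]{Hayes2018}). Thus in each case it suffices to produce an upper bound $h(N)<\infty$, and throughout two mechanisms recur: a direct covering estimate for the relevant microstate spaces, and the monotonicity principle of Fact \ref{fact 1}, used to transport finiteness from a carefully chosen diffuse subalgebra up to the whole algebra.

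For item (1) I would recall Jung's hyperfinite covering estimate: a diffuse amenable algebra is hyperfinite, so its microstates are, up to unitary conjugation, essentially pinned down by a finite-dimensional approximant, and the covering numbers grow subexponentially; this yields $h(N)=0$ (\cite{Jung2007}). Item (2) is the commutation-relation computation: writing $M=M_1\,\overline{\otimes}\,M_2$ with both factors diffuse, any matricial microstate for $M$ forces microstates of generators of $M_1$ and of $M_2$ to approximately commute, and this single relation collapses the asymptotic volume, giving $h(M)<\infty$ (\cite{Jung2007}). Item (3) I would deduce in two steps: if $A\subset M$ is diffuse hyperfinite then $h(A:M)\le h(A)=0$ by Fact \ref{fact 1} together with item (1); quasi-regularity of $A$ then propagates this vanishing to all of $M$ via Hayes' quasi-normalizer estimate, so $h(M)=0$ (\cite[Theorem 3.8]{Hayes2018}).

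For item (4) the mechanism is spectral-gap rigidity of microstates: a Kazhdan set forces every approximate representation to lie within small $\|\cdot\|_2$-distance of a genuine one, so the microstate space, modulo unitary conjugation, concentrates near the finite-dimensional variety of honest representations; the covering number is therefore subexponential and $h(N)<\infty$. Specializing to algebras carrying a Kazhdan set --- in particular property (T) II$_1$ factors and property (T) group von Neumann algebras --- gives the stated class (\cite{HJKE1}). Item (5) rests on the free-probabilistic identification of the microstate free entropy dimension with an $L^2$-Betti number expression: soficity supplies nonempty microstate spaces, finite presentation controls the defining relations, and vanishing of $\beta_1^{(2)}$ forces the dimension down to $1$, which is Jung's threshold for strong $1$-boundedness (\cite{JungL2B, Shl2015, HJKE2}). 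Item (6) I would take directly from the Brannan--Vergnioux computation of the relevant invariants for free orthogonal quantum group factors (\cite{BrannanVergnioux}).

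Item (7) is where the new content lies: the factors of \cite{fullee} are strongly $1$-bounded by construction, and the assertion is that their ultrapowers are as well. Here I would use that these factors are non-Gamma, so $M'\cap M^{\mathcal{U}}=\C$; combined with Fact \ref{in the presence of the ultra}, which already gives $h(M:M^{\mathcal{U}})=h(M)<\infty$, the triviality of the relative commutant is exactly what allows the finite entropy of $M$ to absorb that of the ultrapower and conclude $h(M^{\mathcal{U}})<\infty$ (\cite{fullee}). The main obstacle, common to items (3) and (7), is precisely this passage from a distinguished subalgebra to the ambient algebra: bounding $h(M)$ (resp.\ $h(M^{\mathcal{U}})$) by $h(A:M)$ (resp.\ $h(M)$) is where the genuine analytic work sits, since naive monotonicity only bounds the subalgebra by the whole, not conversely. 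Among the remaining items I expect item (5) to be the deepest input, as it relies on Shlyakhtenko's free-probability machinery relating microstate dimension to $L^2$-Betti numbers.
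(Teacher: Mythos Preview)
The paper does not prove this Fact at all: it is stated purely as a list of known examples, each accompanied only by a citation to the literature where the result is established. There is no argument in the paper to compare against. Your proposal therefore goes well beyond what the paper itself does by sketching the mechanism behind each citation.

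Your sketches for items (1)--(6) are broadly faithful to the cited sources and capture the right ideas. For item (7), however, your argument has a genuine gap. From $h(M:M^{\mathcal U})=h(M)<\infty$ together with $M'\cap M^{\mathcal U}=\mathbb C$ you conclude $h(M^{\mathcal U})<\infty$, but neither Fact \ref{fact 1} nor Fact \ref{in the presence of the ultra} yields this implication: monotonicity bounds $h(M:M^{\mathcal U})$ above by $h(M^{\mathcal U})$, not the reverse, and triviality of the relative commutant does not by itself invert that inequality. The phrase ``absorb that of the ultrapower'' hides the missing step. The passage from strong $1$-boundedness of $M$ to that of $M^{\mathcal U}$ in \cite{fullee} is specific to the construction there and is not the soft consequence of non-Gamma that you suggest. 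Since the paper merely cites \cite{fullee} at this point, this is not a discrepancy with the paper's treatment, but it is a gap in your sketch that you should flag rather than paper over.
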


\section{Existential embeddings of $F_2$ }

%The study of the  elementary theory of non abelian free groups has evolved into a pillar of modern of group theory, thanks to deep works of Sela (see his series of papers starting from \cite{sela1}), and several other mathematicians.  The point of this subsection is to document examples of groups that admit existential inclusions of the free group on two generators, $F_2$. 

A sequence of homomorphisms $\{\pi_n:G \to H\}$, $n\in \mathbb{N}$, is \emph{eventually faithful} if for every $g\neq 1_G$, there exists an $N\in \mathbb{N}$ such that for all $n>N$, $\pi_n(g)\neq 1_H$.

\begin{lemma}[see also Lemma 3.1 of \cite{andre}]\label{andre}

Let $G$ be a finitely generated group and let $H$ be a finitely generated subgroup of $G$. If there is a sequence of eventually faithful homomorphisms $\{\pi_n:G \to H\}$, such that ${\pi_n}(h)=h$ for all $h\in H, n\in \mathbb{N}$, then there exists a non principal ultrafilter $\mathcal{U}$ on $\mathbb{N}$ and an embedding $\pi: G\to H^{\mathcal{U}}$ such that $\pi_{|H}= d_{\mathcal{U}}(H)$. 
    
\end{lemma}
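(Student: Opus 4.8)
The plan is to build the embedding $\pi\colon G\to H^{\mathcal U}$ directly from the given sequence $\{\pi_n\colon G\to H\}$ by taking the ``ultralimit'' of these maps, and then to use eventual faithfulness to check injectivity and the condition ${\pi_n}_{|H}=\mathrm{id}$ to identify the restriction with the diagonal inclusion $d_{\mathcal U}$. First I would fix any non-principal ultrafilter $\mathcal U$ on $\mathbb N$ and define $\pi(g)=(\pi_n(g))_{\mathcal U}\in H^{\mathcal U}$. This is well defined and is a group homomorphism because each $\pi_n$ is, and the algebraic ultraproduct $H^{\mathcal U}=\bigl(\prod_{\mathbb N}H\bigr)/N$ is itself a group with componentwise operations (as recalled in Section 3). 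So $\pi$ is at least a homomorphism with essentially no work.

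Next I would verify injectivity. Let $g\in G$ with $g\neq 1_G$. By eventual faithfulness there is $N\in\mathbb N$ with $\pi_n(g)\neq 1_H$ for all $n>N$; hence $\{n:\pi_n(g)\neq 1_H\}$ is cofinite, so it lies in $\mathcal U$ (as $\mathcal U$ is non-principal), which means $\{n:\pi_n(g)=1_H\}\notin\mathcal U$, i.e. $(\pi_n(g))_{\mathcal U}\neq 1_{H^{\mathcal U}}$. Thus $\pi(g)\neq 1$, so $\ker\pi$ is trivial and $\pi$ is an embedding. Then I would check the restriction to $H$: for $h\in H$, the hypothesis ${\pi_n}_{|H}=\mathrm{id}_H$ (more precisely, $\pi_n|_H$ is the inclusion $H\hookrightarrow H$) gives $\pi(h)=(\pi_n(h))_{\mathcal U}=(h)_{\mathcal U}=d_{\mathcal U}(h)$ on the nose, so $\pi|_H=d_{\mathcal U}|_H$ as claimed. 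Since $G$ and $H$ are finitely generated, there is no set-theoretic subtlety in forming these countable ultraproducts.

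I expect this argument to be almost entirely routine; there is no serious obstacle, which is presumably why the statement is phrased as a lemma citing \cite{andre}. The only point requiring a moment of care is making sure the phrase ``${\pi_n}_{|H}=1$'' in the hypothesis is read correctly as ``$\pi_n$ restricted to $H$ equals the identity/inclusion of $H$ into $H\subseteq G$'' rather than ``the trivial homomorphism'' — under the latter (nonsensical) reading $\pi$ would kill $d_{\mathcal U}(H)$. With the correct reading, and combined with Fact (3) of Section 3 characterizing existential embeddings, this lemma is exactly the tool that certifies $H\hookrightarrow G$ is an existential embedding whenever such an eventually faithful retraction sequence exists, which is how it will be applied to Sela's hyperbolic towers over $F_2$.
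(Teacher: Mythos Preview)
Your proposal is correct and follows exactly the same approach as the paper: fix a non-principal ultrafilter $\mathcal U$, define $\pi(g)=(\pi_n(g))_{\mathcal U}$, use that each $\pi_n$ is a homomorphism to get that $\pi$ is, use eventual faithfulness (cofinite sets lie in $\mathcal U$) for injectivity, and use $\pi_n|_H=\id_H$ to identify $\pi|_H$ with $d_{\mathcal U}$. Your write-up is in fact a bit more explicit than the paper's, and your clarification of the notation ``${\pi_n}_{|H}=1$'' is well taken.
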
 

\begin{proof}
      Fix a a non principal ultrafilter $\mathcal{U}$ on $\mathbb{N}$. Define the map $\pi: G\to H^{\mathcal{U}}$ as follows: $\pi(g)= (\pi_n(g))$. Clearly, $\pi(h)= (h)_{\mathcal U}$ and hence, $\pi_{|H}= d_{\mathcal{U}}(H)$. Moreover, the fact that $\pi$ is a homomorphism is checked easily since   $\pi_n$ is a homomorphism for all $n\in \mathbb{N}$. Also, $\pi$ is injective because for all $g\in G$, $\{\pi_n(g)\neq 1_H\}\in \mathcal{U}$.    
\end{proof}

One can construct by hand such an eventually faithful sequence of homomorphisms in the case that $G$ is a hyperbolic surface group and $H$ is a particular non abelian free subgroup. 

\begin{lemma}[see Corollary 2.2 of \cite{Breuillard_2006}]\label{ultra}

Let $G$ be the fundamental group of a surface of an orientable surface of genus $2r$. Consider a presentation of $G$ as follows: 
$$G= \langle    a_i,a_i',b_i,b_i', 1\leq i\leq r\ |\ \prod_{i=1}^r [a_i,a_i']\prod_{j=1}^r [b_j,b_j']=1_G\rangle.$$

Then consider the automorphism $\sigma$ of $G$ that leaves $a_i$ and $a_i'$ fixed for all $1\leq i\leq r$ and sends every $b_i$ to $gb_ig^{-1}$ and every $b_i'$ to $gb_i'g^{-1}$ where $g=\prod_{i=1}^r [a_i,a_i']$. Abuse notation and let  $\pi: G\to \langle a_1,\hdots a_r,a_1',\hdots a_r' \rangle $ given by $\pi(a_i)= \pi(b_i)= a_i$ and $\pi(a_i')= \pi(b_i')= a_i'$. Then the sequence of homomorphisms $\{\pi\circ \sigma^n\}_{n\in \mathbb N}$ is eventually faithful. 
\end{lemma}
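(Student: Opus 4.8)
The plan is to exploit the amalgamated free product decomposition of $G$ induced by the curve $g$. Write $F=\langle a_1,\ldots,a_r,a_1',\ldots,a_r'\rangle$ and $B=\langle b_1,\ldots,b_r,b_1',\ldots,b_r'\rangle$. The genus-$2r$ surface $S$ splits along the separating simple closed curve freely homotopic to $g=\prod_{i=1}^{r}[a_i,a_i']$ into two genus-$r$ pieces with one boundary circle each, so on fundamental groups
$$G\;=\;F \ast_{\langle g\rangle} B ,$$
with $F$ and $B$ free of rank $2r$ and the edge group $\langle g\rangle\cong\mathbb Z$ generated inside $F$ by the boundary word $\prod_i[a_i,a_i']$ and inside $B$ by the boundary word $\prod_j[b_j,b_j']$, these being identified. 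In these terms $\sigma$ is the automorphism that is the identity on the vertex group $F$ and conjugation by $g$ on the vertex group $B$ (well defined because conjugation by $g$ fixes $\langle g\rangle$ pointwise), and $\pi$ is the retraction onto $F$ whose restriction $\rho:=\pi|_{B}\colon B\to F$ is the isomorphism sending $b_j\mapsto a_j$, $b_j'\mapsto a_j'$ (well defined because it carries the boundary word of $B$ to $g$). Since $\pi(g)=g$, for a syllable $y$ one has $\pi(\sigma^n(y))=y$ if $y\in F$ and $\pi(\sigma^n(y))=g^{\,n}\rho(y)g^{-n}$ if $y\in B$.

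Next I would reduce to one generic case. Fix $w\neq 1$ in $G$ and write it in reduced form $w=y_1y_2\cdots y_m$ for the amalgam, the $y_i$ lying alternately in $F$ and in $B$ and, if $m\ge 2$, none of them in $\langle g\rangle$. If $m=1$ then $\pi(\sigma^n(w))$ is either $w$ or $g^{\,n}\rho(w)g^{-n}$, hence nonzero for every $n$ (as $\rho$ and conjugation are injective), and there is nothing to prove. Assume $m\ge 2$. Applying $\pi\circ\sigma^n$ syllable by syllable,
$$\pi(\sigma^n(w))\;=\;z_1^{(n)}z_2^{(n)}\cdots z_m^{(n)}\ \in\ F,\qquad z_i^{(n)}=\begin{cases}y_i,& y_i\in F,\\[2pt] g^{\,n}\rho(y_i)g^{-n},& y_i\in B,\end{cases}$$
where $y_i\notin\langle g\rangle$ for the $F$-syllables and $\rho(y_i)\notin\langle g\rangle$ for the $B$-syllables (the latter since $y_i$ is outside the edge group and $\rho$ maps the edge group isomorphically onto $\langle g\rangle$). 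The task is to show this is $\neq 1$ for all large $n$.

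Here I would run a north--south dynamics argument, made concrete in matrices. Put a hyperbolic metric on $S$ to get a faithful representation of $G$ onto a cocompact Fuchsian group; every nontrivial element is then hyperbolic. Lifting to $\SL_2(\mathbb R)$ (possible as $G$ is torsion free) and conjugating, arrange $g=\operatorname{diag}(\lambda,\lambda^{-1})$ with $|\lambda|>1$, so $g$ fixes $0,\infty$ on the boundary circle. Since $g=\prod_i[a_i,a_i']$ is not a proper power in $F$, any $y\in F\setminus\langle g\rangle$ fails to commute with $g$, hence shares no axis endpoint with $g$ (two hyperbolic elements of a discrete group cannot share exactly one fixed point), so both off-diagonal entries of the matrix $y$ are nonzero. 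Writing $\rho(y_i)=\left(\begin{smallmatrix}p_i&q_i\\ r_i&s_i\end{smallmatrix}\right)$ for the $B$-syllables we thus have $q_i\neq 0$ and
$$z_i^{(n)}=\begin{pmatrix}p_i&\lambda^{2n}q_i\\ \lambda^{-2n}r_i&s_i\end{pmatrix}=\lambda^{2n}\begin{pmatrix}0&q_i\\ 0&0\end{pmatrix}+O(1)\qquad(n\to\infty),$$
while the $F$-syllables contribute the fixed invertible matrices $y_i$. Expanding the product, $\pi(\sigma^n(w))=\lambda^{2nb}M_\infty+O(\lambda^{2n(b-1)})$, where $b\ge 1$ is the number of $B$-syllables and $M_\infty$ is the telescoping product of the rank-one nilpotents $\left(\begin{smallmatrix}0&q_i\\0&0\end{smallmatrix}\right)$ with the matrices $y_i$ of the intervening $F$-syllables inserted. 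Because consecutive $B$-syllables are separated by exactly one $F$-syllable, because $\left(\begin{smallmatrix}0&1\\0&0\end{smallmatrix}\right)Y\left(\begin{smallmatrix}0&1\\0&0\end{smallmatrix}\right)=Y_{21}\left(\begin{smallmatrix}0&1\\0&0\end{smallmatrix}\right)$ with $Y_{21}\neq 0$ for the matrix $Y$ of any such interior $F$-syllable, and because the extreme factors contribute only multiplication by invertible matrices, one checks $M_\infty\neq 0$. With $|\lambda|>1$ and $b\ge 1$ this forces $\|\pi(\sigma^n(w))\|\to\infty$, so $\pi(\sigma^n(w))\neq 1$ for all large $n$; faithfulness of the representation transfers this to $F$, which is exactly eventual faithfulness.

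I expect the main obstacle to be the nonvanishing of the leading coefficient $M_\infty$: an unstructured product of rank-one matrices can collapse to $0$, and what prevents it here is precisely that the interior $F$-syllables are invertible and lie outside the edge group $\langle g\rangle$, so that their $(2,1)$-entries do not vanish --- which is exactly the force of the reduced-form hypothesis on $w$ together with $g$ being primitive and the chosen representation being without parabolics. One could avoid matrices and instead argue by a direct cancellation estimate in $F$, showing that for $n$ large no cancellation in $z_1^{(n)}\cdots z_m^{(n)}$ reaches the interior of a block $g^{\pm n}$; but bookkeeping the leading order is cleanest in the linear picture.
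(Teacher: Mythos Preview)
The paper gives no proof of this lemma; it simply records the statement with a pointer to Corollary~2.2 of \cite{Breuillard_2006}. So there is nothing to compare against, and your write-up stands on its own. The strategy you chose --- decompose $G=F\ast_{\langle g\rangle}B$ along the separating curve, recognise $\sigma$ as the Dehn twist about it, push a reduced amalgam word through $\pi\circ\sigma^{n}$, and read off the north--south dynamics of $g$ in a Fuchsian matrix model --- is exactly the standard mechanism behind this kind of result, and your leading-term computation of $M_{\infty}$ via the $(2,1)$-entries of the interior $F$-syllables is correct and is the crux of the argument.

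There is one genuine slip, at the very first step. You claim $\pi$ is a well-defined retraction because $\rho=\pi|_{B}$ ``carries the boundary word of $B$ to $g$''. But the surface relator gives $\prod_{j}[b_{j},b_{j}']=g^{-1}$ in $G$, so the edge-group generator on the $B$ side is $\bigl(\prod_{j}[b_{j},b_{j}']\bigr)^{-1}$, and your $\rho$ sends that element to $g^{-1}$, not to $g$. Equivalently, the assignment $a_i,b_i\mapsto a_i$, $a_i',b_i'\mapsto a_i'$ sends the surface relator to $g^{2}\neq 1$ in $F$, so it does not descend to a homomorphism on $G$. This defect is already present in the lemma as stated in the paper (which also has a visible typo in the relator) and is presumably a transcription issue from the source; the fix is to replace $\rho$ by any isomorphism $B\to F$ that \emph{does} carry the $B$-edge generator to $g$ (for instance $b_{j}\mapsto a_{r+1-j}'$, $b_{j}'\mapsto a_{r+1-j}$, under which $\prod_j[b_j,b_j']\mapsto g^{-1}$ as required). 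With that correction your argument goes through unchanged, since all you use about $\rho$ is that it is an isomorphism matching edge groups, hence sending $B\setminus\langle\text{edge}\rangle$ into $F\setminus\langle g\rangle$.

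Two minor points. That $g=\prod_i[a_i,a_i']$ is not a proper power in $F$ is true but deserves a word of justification (it is a classical fact about surface words in free groups, or geometrically follows because $g$ is represented by a simple closed curve). And torsion-freeness alone does not guarantee a lift from $\mathrm{PSL}_2(\mathbb R)$ to $\SL_2(\mathbb R)$; such a lift does exist for Fuchsian representations of closed orientable surfaces (the Euler class is even), but you could equally run the whole estimate in $\mathrm{PSL}_2(\mathbb R)$ and sidestep the question.
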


More generally than this discussion one can describe a class of groups that admit an elementary inclusion of $F_2$, which in particular satisfies the conclusion of Lemma \ref{andre}. For the definition of this family, we direct the reader to \cite{Perin, Perinthesis}.  See Section A.1 and the Proof of Corollary A.2 (of \cite{guirardel2020towers}), where an eventually faithful sequence of homomorphisms in the case that $G$ is a hyperbolic tower over an $F_2$ subgroup is constructed:     

\begin{theorem}\label{sela hyperbolic tower}[\cite{guirardel2020towers}]
    Let $G$ be a hyperbolic tower (in the sense of Sela \cite{sela1}) over an $F_2$ subgroup.  Then this inclusion of $F_2$ in $G$  satisfies the conclusion of Lemma \ref{andre}.
\end{theorem}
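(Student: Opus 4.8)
The plan is to deduce the statement from Lemma \ref{andre}: it suffices to exhibit, for the inclusion $H = F_2 \hookrightarrow G$, a sequence of eventually faithful homomorphisms $\{\pi_n : G \to H\}$ with $\pi_n|_H = \id$ for all $n$. The construction of such a sequence is precisely the content of Section A.1 and Corollary A.2 of \cite{guirardel2020towers}, and I would organize it as a direct generalization of the surface computation recorded in Lemma \ref{ultra}.

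First I would isolate the two pieces of data that the hyperbolic tower structure provides. By the very definition of a tower over $H$ (in the sense of Sela \cite{sela1}), built floor by floor where each floor is either a surface flap retracting onto its boundary or a free-product floor retracting onto the complementary factor, there is a canonical retraction $r : G \to H$ with $r|_H = \id_H$. Second, each surface floor carries Dehn-twist automorphisms; assembling these over the floors produces an automorphism (or finite tuple of automorphisms) $\sigma \in \Aut(G)$ fixing $H$ pointwise. Setting $\pi_n = r \circ \sigma^n$ one immediately gets $\pi_n|_H = r|_H = \id_H$, so the normalization hypothesis of Lemma \ref{andre} holds for free. This mirrors the surface case, where $\sigma$ conjugates the handle generators $b_i, b_i'$ by $g = \prod_i [a_i,a_i']$ while the retraction collapses them onto the free generators $a_i, a_i'$.

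The substance of the argument --- and the step I expect to be the main obstacle --- is \emph{eventual faithfulness}: for each $g \neq 1_G$ one must show $\pi_n(g) \neq 1_H$ for all large $n$. In the explicit surface case this is a ping-pong computation, as iterating the twist spreads the conjugators $r(g)^n$ so far apart that reduced words in $G$ survive the retraction. In the general tower the same phenomenon is established geometrically: the non-degeneracy (genus / Euler characteristic) conditions built into the definition of a hyperbolic floor guarantee, via the Rips machine and the theory of limit groups, that the rescaled sequence of $\sigma^n$-twisted homomorphisms converges in the space of actions to a \emph{faithful} action of $G$ on a real tree, and faithfulness of this limit action forces $(\pi_n)$ to be eventually faithful. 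This is exactly where the structural depth of Sela's towers enters.

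Finally I would feed the sequence $\{\pi_n\}$ into Lemma \ref{andre}, which produces a non-principal ultrafilter $\mathcal U$ on $\N$ and an embedding $G \to H^{\mathcal U}$ restricting to the diagonal $d_{\mathcal U}$ on $H$, i.e.\ the asserted conclusion of Lemma \ref{andre} for $F_2 \hookrightarrow G$. As a softer alternative that avoids the explicit sequence, one could invoke Sela's theorem that a hyperbolic tower yields an elementary --- hence existential --- embedding $H \hookrightarrow G$, and combine it with the model-theoretic characterization of existential embeddings via ultraproducts recorded in Section 3; this gives the same embedding after reconciling the abstract Keisler--Shelah ultrafilter with a non-principal ultrafilter on $\N$ under the continuum hypothesis assumed throughout the paper.
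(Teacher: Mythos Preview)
Your proposal is correct and matches the paper's approach exactly: the paper does not prove this theorem in-house but simply cites Section A.1 and the proof of Corollary A.2 of \cite{guirardel2020towers} for the construction of an eventually faithful sequence of retractions $\pi_n: G\to F_2$ restricting to the identity on $F_2$, after which Lemma \ref{andre} yields the desired ultrapower embedding. Your sketch of the retraction-plus-twist construction and the limit-group argument for eventual faithfulness is a faithful (indeed more detailed) summary of what that reference does, and your alternative route via Sela's elementarity theorem is also noted in the paper's preamble to the statement.
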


%One sees this by taking $H=F_2 $ in Figure 1 of \cite{Perin}, because if $G$ is a hyperbolic tower over $H$ then there is a discriminating sequence of retractions $\{\phi_n : G \rightarrow H\}_n$. The proof of this fact is more or less the same as the proof of Corollary 2.2 in \cite{Breuillard_2006}. The proof then follows from using  Lemma 3.1.    

\section{Proofs}

\begin{proof}[Proof of Theorem \ref{maine}]

Let $G$ be a hyperbolic  tower over $H= F_2$. Then from Theorem \ref{sela hyperbolic tower} and  Lemma \ref{andre}, we see that there exists  a non principal ultrafilter $\mathcal{U}$ on $\mathbb{N}$, $H<G$ and an embedding $\pi: G\to H^{\mathcal{U}}$ such that $\pi_{|H}= d_{\mathcal{U}}(H)$. Now applying Lemma \ref{lemma main}, we see that there exists a unital $*$-homomorphism $\Theta: L(G)\to L(H)^{\mathcal{U}}$ such that $\Theta (L(d_{\mathcal{U}}(H)))= D_{\mathcal{U}}(L(H))$.  Now applying Facts \ref{fact 1}, \ref{in the presence of the ultra} we see the following inequality: $\infty= h(L(F_2))= h(L(H))= h(L(H): L(H)^{\mathcal{U}})\leq h(L(G): L(H)^{\mathcal{U}})\leq h(L(G))$. Finally, applying Fact  \ref{in  presence of the ultra}  we are done.

\end{proof}

We record the following proposition which is contained in the above proof:

\begin{proposition}
    If there exists $F_2<G$ an existential embedding for some group $G$, then $L(G)$ is not strongly 1-bounded.    More generally, if $N$ is not strongly 1-bounded, and $N\subset M$ is an existential embedding, then $M$ is not strongly 1-bounded.      

\end{proposition}

\begin{proof}[Proof of Theorem \ref{T theorem}]

From the same inequality as above, we have  $\infty= h(L(F_2))= h(L(F_2): L(F_2)^{\mathcal{U}})\leq h(M: L(F_2)^{\mathcal{U}})\leq h(M)$. However, since $L(F_2)$ does not have property Gamma, it follows that $M$  is a factor, and hence admits a Kazhdan set (see Proposition 1 in \cite{ConnesJones}). By Fact \ref{s1b examples}(\ref{Kzahdan}), we get a contradiction.     
    
\end{proof}

\begin{remark}
    From  Theorem \ref{T theorem},  one actually obtains for every inclusion  of $L(F_2)\subset M$, where $M$ has property (T), a model theoretic sentence $\phi$ with $n$ free variables such that $\phi_M(x_1\hdots x_n)=0$ and $\phi_{L(F_2)} (x_1\hdots x_n)>0$ for some tuple $x_i\in L(F_2)$, $i=1,\hdots n$.  
\end{remark}

\bibliographystyle{amsalpha}
\bibliography{inneramen}

\end{document}